\documentclass{amsart}
\usepackage{amsmath,amscd,amssymb,stmaryrd}
\usepackage{amsfonts}
\usepackage[all]{xy}
\usepackage{mathtools}
\usepackage{enumerate}
\usepackage[retainorgcmds]{IEEEtrantools}
\usepackage{color}
\numberwithin{equation}{subsection}
\theoremstyle{plain}
\newtheorem{thm}[subsection]{Theorem}

\newtheorem{prop}[subsection]{Proposition}
\newtheorem{lemma}[subsection]{Lemma}

\theoremstyle{definition}
\newtheorem*{ackn}{Acknowledgement}
\newtheorem{defn}[subsection]{Definition}
\newtheorem{example}[subsection]{Example}

\theoremstyle{remark}
\newtheorem{rem}[subsection]{Remark}

\begin{document}
\title{The minimal width of universal $p$-adic ReLU neural networks}
\author{S\'andor Z. Kiss, Ambrus P\'al}
\date{February 10, 2026.}
\address{Mathematical Institute, E\"otv\"os Lor\'and University
1117 Budapest, P\'azm\'any P\'eter s\'et\'any 1/C, Hungary}
\email{ambrus.pal@ttk.elte.hu}

\address{Department of Algebra and Geometry, Institute of Mathemetics, Budapest University of Technology and Economics, M\H{u}egyetem rkp. 3., H-1111 Budapest, Hungary; \newline \hspace*{4mm} 
HUN-REN Alfr\'ed R\'enyi Institute of Mathematics, Re\'altanoda utca 13--15., H-1053 Budapest,  Hungary; \newline \hspace*{4mm}}
\email{kiss.sandor@ttk.bme.hu}
\begin{abstract} We determine the minimal width of $p$-adic neural networks with the universal approximation property for continuous
$\mathbb Q_p$-valued functions on compact open subsets with respect to the $L_q$ norms and the $C_1$ norm, where the activation function is a natural $p$-adic analogue of the ReLU function.
\end{abstract}
\footnotetext[1]{\it 2020 Mathematics Subject Classification. \rm 68T07, 12J25, 26E30.}
\maketitle
\pagestyle{myheadings}
\markboth{S\'andor Z. Kiss, Ambrus P\'al}{The minimal width of universal $p$-adic ReLU neural networks}

\section{Introduction}

Many problems where neural networks are applied are inherently the approximation of a function whose values are either zero or one, such as classification of pictures with a cat or without a cat. The usual approach is to interpret the input as a vector in a finite dimensional vector space, to use a neural network with real weights and a real activation function, and to produce a real valued approximation of the target function. A particularly successful class of neural networks for these class of problems are deep networks with ReLU or something similar as the activation function, so consequently there are many papers about the universality of such networks, and in particular the minimal width of such universal deep networks (see \cite{HS}, \cite{KMP}, \cite{LPWHW} and
\cite{PYLS}, for example).

There is no apparent reason why one should not use the field $\mathbb Q_p$ of $p$-adic numbers instead, which has all the structure to have a well-defined problem. In fact we could argue that $p$-adic neural networks, where the role of the reals $\mathbb R$ is played by $\mathbb Q_p$, is more suitable for such classification problems as the one mentioned above. The primary motivation for this paper is to demonstrate this philosophy by completely answering a close $p$-adic analogue of a problem which has been intensively studied over the reals for several years. 

More precisely, we are interested in approximating $\mathbb Z_p$-valued, or more generally $\mathbb Q_p$-valued functions on $\mathbb Z_p^n$, not approximating $\mathbb C_p$-valued functions, where
$\mathbb C_p$ is the Tate field, playing a role analogous to $\mathbb C$ in $p$-adic analysis, nor real valued functions on $\mathbb Z_p^n$. Moreover we will look at the minimal minimal width problem when we fix s particularly simple activation function closely analogous to ReLU, not when we are allowed to use any activation function or at least a class of activation functions, unlike the papers \cite{Zu} or \cite{ZN}. (The latter also consider fundamentally different types of neural networks.) There is another additional subtle choice, namely we allow $\mathbb Q_p$-valued weights on the edges even while approximating $\mathbb Z_p$-valued functions, since with our choice of activation function universality for $\mathbb Z_p$-valued weights is not possible (see Remark \ref{rem1.3} below).

The topological group $\mathbb Z_p^n$ is compact, so it has a unique unimodular Haar measure, and hence we can define $L_q$-norms on
$\mathbb Q_p$-valued functions by taking the absolute value. (Note however it is not possible to define integration for $\mathbb Z_p$-valued continuous functions on $\mathbb Z_p^n$, see Remark \ref{rem2.8} below). Moreover we have an obvious analogue of the $C_1$ norm, which we will call the $L_{\infty}$ norm, therefore we can make sense of the analogue of all the usual approximation problems. Our choice of activation function is closely analogous to ReLu, and it is also very simple to compute.
\begin{defn} The pReLU activation function $\mathrm{pReLU}:\mathbb Q_p\to\mathbb Q_p$ is defined as
$$\mathrm{pReLU}(x)=\begin{cases}
    x & \text{, if } x\in\mathbb Z_p,\\
    0 & \text{, otherwise. } \end{cases}$$
\end{defn}
Our main result is:
\begin{thm}\label{main1.2} For every $q\in [1,\infty]$ the $\mathrm{pReLU}$-networks of width $w$ have the universal approximation property for continuous functions $f:\mathbb Z_p^{d_x} \to\mathbb Q_p^{d_y}$ in the $L_q$ norm if and only if $w\geq\max(d_x+1,d_y)$.
\end{thm}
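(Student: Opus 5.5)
The proof splits into a sufficiency part, a lower bound $w\geq d_y$, and a lower bound $w\geq d_x+1$. Throughout, a width-$w$ network is a composition of affine maps $\mathbb Q_p^{w}\to\mathbb Q_p^{w}$ with coordinatewise $\mathrm{pReLU}$ between them. The input is padded into the first $d_x$ coordinates and the output is read off from the first $d_y$ coordinates after a final affine map.

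\emph{Sufficiency.} First reduce to $L_\infty$: on the compact group $\mathbb Z_p^{d_x}$ with normalized Haar measure we have $\|g\|_q\leq\|g\|_\infty$, so uniform approximation implies $L_q$ approximation for every $q$. A continuous $f:\mathbb Z_p^{d_x}\to\mathbb Q_p^{d_y}$ is uniformly continuous, so it is uniformly approximated by a locally constant function that is constant on the cosets of $p^k\mathbb Z_p^{d_x}$. Such a function is a finite sum $\sum_{a}c_a\mathbf 1_{a+p^k\mathbb Z_p^{d_x}}$ with $c_a\in\mathbb Q_p^{d_y}$. It therefore suffices to represent these step functions exactly, with width $\max(d_x+1,d_y)$.

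The key gadget is the indicator of a ball in one variable. The value $\mathrm{pReLU}(p^{-k}(x-a))$ is nonzero only on $a+p^k\mathbb Z_p$. To produce a clean indicator, I would use $\mathrm{pReLU}(p^{-k}(x-a)+1)-\mathrm{pReLU}(p^{-k}(x-a))$. This equals $1$ on the ball and $0$ off it, because for $y\notin\mathbb Z_p$ also $y+1\notin\mathbb Z_p$. Since this uses two neurons, I would look for a one-neuron version. One option is to first shift $x$ to $x'=x-a$ and apply $\mathrm{pReLU}(p^{-k}x'+u)$ with $u$ a unit; then $\mathrm{pReLU}(\cdot)-p^{-k}x'$ stays affinely recoverable using the carried coordinates.

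The multivariate indicator is the product over coordinates. Since the only nonlinearity is pReLU, I would instead handle one coordinate at a time with a running accumulator. The plan is to keep $d_x$ coordinates carrying the input. Along the coordinates in turn, I would propagate a flag $t$ that is $0$ if the point is in the target ball so far and non-integral (say $p^{-1}$-scaled) otherwise. The propagation is $t\mapsto \mathrm{pReLU}(\ldots)$ combined with non-integrality: membership in $\mathbb Z_p$ of a sum $p^{-k}(x_i-a_i)+p^{-1}\cdot(\text{flag})$ needs care, because sums of non-integral elements can become integral. To avoid this cancellation I would exploit ultrametric scale separation, choosing powers of $p$ so that the valuations of different contributions never coincide.

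The outputs are accumulated in the same spirit: with $d_y\leq w$ output coordinates, the sum $\sum_a c_a\mathbf 1_a$ is built sequentially, reusing the slots. Here I expect to need that $\mathrm{pReLU}$ is the identity on $\mathbb Z_p$, which lets values be passed through after rescaling them into $\mathbb Z_p$. When $d_y>d_x+1$ the extra slots hold the outputs. When $d_y\leq d_x+1$ the outputs must share slots with the inputs. I would try to handle this by encoding the accumulated output into the flag coordinate, using scale separation, and decoding it at the end.

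\emph{Necessity of $w\geq d_y$.} If $w<d_y$, every hidden layer passes through $\mathbb Q_p^{w}$, so the network's output lies in an affine subspace of dimension $<d_y$. Taking $f$ to be the identity on $d_y$ independent coordinates (or a map whose image is not near any hyperplane) gives a positive $L_q$ distance, by a compactness argument over hyperplanes.

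\emph{Necessity of $w\geq d_x+1$.} This is the main obstacle. With $w=d_x$, if the first affine map is not injective, the network factors through a lower-dimensional projection and cannot approximate, say, a function depending on the lost direction. If it is injective, then on the open set where all coordinates lie in $\mathbb Z_p$ the layer is invertible affine. I would show inductively that there is a ball on which the network is affine and injective, or collapses a direction. More precisely, I would find a nonempty open set $U$ where the network is affine. Then I would take the target to be the indicator of a small ball placed away from where the network is informative, giving a contradiction.

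The cleaner route I would try first is to show that each layer $\sigma\circ A$ with $A$ invertible maps $\mathbb Q_p^{d_x}$ so that some ball of fixed Haar measure, preserved up to scaling, is mapped affinely. The function then cannot be close to the indicator of a small ball both inside and outside that ball. This measure bookkeeping is where I expect the real difficulty to lie.
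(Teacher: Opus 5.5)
\textbf{There is a genuine gap in your lower bound $w\geq d_x+1$.} Your proposed obstruction, an indicator of a small ball, cannot work for $q<\infty$, and the theorem needs $q=1$. Already for $d_x=1$ and width $1$, take the network $g_N(x)=1-\mathrm{pReLU}\bigl(p^{-N}\mathrm{pReLU}(x/p)+1\bigr)$. It coincides with $\mathbf 1_{p\mathbb Z_p}$ outside $p^{N+1}\mathbb Z_p$ and differs from it by at most $1$ there, so $\|g_N-\mathbf 1_{p\mathbb Z_p}\|_q\leq p^{-(N+1)/q}\to0$. Any indicator of a small ball is itself constant on some ball of radius $\frac1p$, so it is the wrong kind of target. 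Likewise, in your non-injective case, ``a function depending on the lost direction'' depends on the network, whereas you need one $f$ that is uniformly far from all width-$d_x$ networks.

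The paper supplies two ingredients you are missing:
\begin{enumerate}
\item[(1)] \emph{The dichotomy of Theorem \ref{key}.} On $\mathbb Z_p^n$, a network of input dimension and width $n$ is either affine, or constant in some direction on a ball of radius $\frac1p$. Take the first layer $k$ with $f_k(\mathbb Z_p^n)\not\subset\mathbb Z_p^n$, and write $f_k|_{\mathbb Z_p^n}=l+v$. For some $i$, the set of $x\in\mathbb Z_p^n$ where the $i$-th coordinate of $f_k(x)$ is integral is a proper convex set, so by Proposition \ref{pidgeon} it misses a ball $B$ of radius $\frac1p$. On $B$, $\mathrm{pReLU}$ kills the $i$-th coordinate. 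Any $h$ with $l(h)$ on the $i$-th axis is then a direction of constancy, and such $h$ exists because $l$ is an endomorphism of $\mathbb Q_p^n$. The fixed radius $\frac1p$ is what makes the final bound uniform.
\item[(2)] \emph{An injective continuous target} $\mathbb Z_p^{d_x}\to\mathbb Z_p$, which exists by Brouwer's theorem. Compactness gives $|f(x+h)-f(x)|\geq\epsilon$ for all $\|h\|=\frac1p$. A translation argument on $B$ turns this into $\|f-g\|_1\geq\frac{\epsilon}{2p^{d_x}}$ for every $g$ constant in a direction on a ball of radius $\frac1p$.
\end{enumerate}
The affine alternative is excluded separately. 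For $d_x\geq2$ an affine map to $\mathbb Q_p$ has a kernel. For $d_x=1$ the paper uses a quadratic target and compactness in the coefficients.

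\textbf{Your sufficiency sketch also does not close.} You reduce to representing $\mathbb Q_p^{d_y}$-valued step functions \emph{exactly} at width $\max(d_x+1,d_y)$. This is more than is needed, and more than the paper proves: exactness is only obtained for $d_y=1$ (Theorem \ref{upper2b}). Your slot count also fails. Keeping $d_x$ inputs and a flag alive uses $d_x+1$ slots, so for $d_y>d_x+1$ only $d_y-d_x-1$ slots are free, not $d_y$. For $d_y\leq d_x+1$ you defer everything to ``decoding at the end'' a scalar into $d_y$ coordinates within width $d_y$. That is exactly the step needing a new idea, and you give no mechanism. The gadgets are not right either. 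Your one-neuron variant $\mathrm{pReLU}(p^{-k}x'+u)-p^{-k}x'$ equals $-p^{-k}x'$, not $0$, off the ball. And $\mathrm{pReLU}$ of $p^{-k}(x_i-a_i)$ plus a flag returns an $x_i$-dependent value on the ball, not a clean flag.

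The paper instead compresses and then expands:
\begin{enumerate}
\item[(a)] An encoder of width $d_x+1$. It replaces each coordinate in place by a fixed representative of its class mod $p^m$, by composing the width-$2$ maps $x\mapsto x-p^m\mathrm{pReLU}(p^{-m}(x-\alpha))$ with one scratch slot. It then forms $\sum_i p^{m(i-1)}f(x_i)$, a single $p$-adic integer separating the cosets of $p^m\mathbb Z_p^{d_x}$.
\item[(b)] Exact interpolation of any map on a finite subset of $\mathbb Z_p$ by width-$2$ networks, moving one point at a time with $x\mapsto x+\mathrm{pReLU}(c(x-\gamma))$.
\item[(c)] A decoder of width $d_y$ built from a juggling function $g$. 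Here $x\mapsto(x,g(x))$ is a width-$2$ network, and every fibre $g^{-1}(y)$ meets every coset of $p^n\mathbb Z_p$. Hence $x\mapsto(x,g(x),g(g(x)),\ldots)$ meets every coset of $p^n\mathbb Z_p^{d_y}$.
\end{enumerate}
This gives $L_\infty$-error at most $p^{-n}$, i.e.\ approximation rather than exact representation.

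Your argument for $w\geq d_y$ is fine in outline and matches the paper's: the image lies in a proper affine subspace, made quantitative via Proposition \ref{pidgeon}. However, the ``identity'' target only makes sense when $d_x\geq d_y$. A locally constant target taking each value of $\{0,\ldots,p-1\}^{d_y}$ on sets of equal measure works in general.
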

\begin{rem}\label{rem1.3} The claim above is not true if we only allow weights in $\mathbb Z_p$. In this case taking pReLU has no effect, and hence such networks only compute affine maps on $\mathbb Z_p^{d_x}$, and the latter do not have the universal approximation property.
\end{rem}
\begin{rem}\label{rem1.4} Let $K\subset\mathbb Q_p^{d_x}$ be a compact, open subset. Then it is bounded, and hence there is a $0\neq c\in\mathbb Q_p$ and a $d\in\mathbb Q_p$ such that $cK+d\subseteq
\mathbb Z_p^{d_x}$. Since $cK+d$ is both open and closed, the extension of every continuous function $f:cK+d\to\mathbb Q_p^{d_y}$ by zero is a continuous function on $\mathbb Z_p^{d_x}$. Therefore the generalisation of Theorem \ref{main1.2} holds for every compact, open subset of $\mathbb Q_p^{d_x}$. 
\end{rem}
Note that unlike in the real case, there is no discrepancy between the lower and upper bound in the $C_1$ norm case, and the bound is the same as for all $L_q$ norms. The fundamental reason for this is that the topology of $\mathbb Q_p$, unlike the topology of $\mathbb R$, is totally disconnected, so the subtle topological obstructions in the works cited above do not arise. Moreover this fact leads to an obvious strategy for the upper bound as follows: every function in both of case can be approximated by functions which are locally constant, and the latter are always functions whose value only depend on the coordinates mod $p^k$ for a suitable $k$, using that $\mathbb Z^n_p$ is compact and totally disconnected. In fact as a stepping stone to our main result, we show the following
\begin{thm}\label{upper2} Let $w\geq d_x+1$ and let $f:\mathbb Z_p^{d_x}\to\mathbb Q_p$ be a locally constant function. Then there is a $\mathrm{pReLU}$-network of width $w$ which computes $f$.
\end{thm}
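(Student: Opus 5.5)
Since $\mathbb Z_p^{d_x}$ is compact and $f$ is locally constant, there is a $k$ such that $f(x)$ depends only on $x \bmod p^k$. Hence
$$f=\sum_{a\in(\mathbb Z/p^k)^{d_x}} c_a\,\mathbf 1_{a+p^k\mathbb Z_p^{d_x}} .$$
It suffices to treat $w=d_x+1$, since extra channels can be fed with zero. I will build a deep network of width $d_x+1$ with three ingredients:
\begin{itemize}
\item $d_x$ channels that carry the input coordinates, up to information that does not change them modulo $p^k$;
\item one working channel;
\item a running partial sum, which is added ball by ball.
\end{itemize}
The basic observation is that pReLU is the identity on $\mathbb Z_p$. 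So any channel whose value is kept in $\mathbb Z_p$ passes through a layer unchanged when its weight is $1$ and its bias is $0$.

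\textbf{Storing the accumulator.} There is no free channel for the running sum, so I will fold it into the first coordinate channel. That channel will carry $x_1+p^{k}S$, where $S\in\mathbb Z_p$ is a suitably rescaled partial sum. Since $x_1+p^kS\equiv x_1 \pmod{p^k}$, every later ball test reads the same residue class. To keep $S\in\mathbb Z_p$, I multiply all $c_a$ by a common power $p^N$ so that they lie in $\mathbb Z_p$. At the very end, the output affine map divides by $p^{k+N}$ after subtracting $x_1$. The value $x_1$ is recovered from the other channels or recomputed, or alternatively the subtraction is avoided by a final pReLU projection; I will settle this detail in the write-up.

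\textbf{Computing one indicator.} The core gadget computes $c_a\mathbf 1_{a+p^k\mathbb Z_p^{d_x}}(x)$ into the working channel using only the $d_x$ coordinate channels, and then adds it into the first channel. For a single coordinate I use the identity
$$\mathrm{pReLU}(y+1)-\mathrm{pReLU}(y)=\mathbf 1_{\mathbb Z_p}(y)\qquad (y=p^{-k}(x_i-a_i)),$$
which holds because $y+1\in\mathbb Z_p$ if and only if $y\in\mathbb Z_p$. This identity needs two pReLU evaluations, so I will realise it across consecutive layers:
\begin{enumerate}
\item Put $\mathrm{pReLU}(y)$ into the working channel, scaled by $p^{M}$ so that it lies in $\mathbb Z_p$.
\item Subtract it into the accumulator-carrying channel.
\item Compute $\mathrm{pReLU}(y+1)$ in the working channel and add it back.
\end{enumerate}
All values are kept at high $p$-adic precision so that nothing leaves $\mathbb Z_p$ unintentionally.

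\textbf{Conjunction over coordinates.} To get the indicator of a product ball, I chain the tests. Once the working channel holds $t\in\{0,1\}$ (scaled), the next layer computes
$$\mathrm{pReLU}\bigl(p^{-k}(x_{i+1}-a_{i+1})+p^{-1}(1-t)\bigr).$$
This is $0$ whenever $t=0$, because $p^{-1}$ pushes the argument out of $\mathbb Z_p$. Combined with the $+1$ difference trick, this gives the product of the indicators. Concatenating these blocks over all $a\in(\mathbb Z/p^k)^{d_x}$ yields $f$ exactly.

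\textbf{Main obstacle.} The hardest part is the bookkeeping in the indicator gadget. With only one spare channel, the two-evaluation identity must be threaded through the accumulator channel without corrupting its residue modulo $p^k$, and the AND over coordinates must use the same single channel. My first attempt is the scaling scheme above, in which every auxiliary quantity is multiplied by a large power of $p$ before being parked in the first channel. If that fails, I would instead park the partial products in the last coordinate channel once that coordinate has been tested, since its residue is no longer needed for the current ball. Only then would I consider enumerating the balls in lexicographic order so that coordinates can be consumed and later restored.
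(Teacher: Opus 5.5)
\textbf{There is a genuine gap, and it is the step you postpone} (``I will settle this detail in the write-up'').

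\emph{The accumulator cannot be read back.} Storing the running sum as $c_1=x_1+p^kS$ records nothing retrievable, when $S$ is a constant $s_a\in\mathbb Z_p$ on each ball $a+p^k\mathbb Z_p^{d_x}$. On every such ball, the map $T(x)=(x_1+p^kS(x),x_2,\ldots,x_{d_x})$ is just a translation of the ball onto itself. Hence $S=\tilde S\circ T$ with $\tilde S(y)=s_{y \bmod p^k}=p^Nf(y)$. After all your blocks, the coordinate channels carry $S$ in exactly the form in which the input carried $f$.

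The output map is affine. An affine combination of $c_1,x_2,\ldots,x_{d_x}$ that is locally constant in $x$ must have all coefficients zero, since on each ball it differs from $\lambda_1x_1+\cdots+\lambda_{d_x}x_{d_x}$ by a constant. So $S$ can reach the output only through the working channel. That channel would have to hold $S$ itself, or $x_1=c_1-p^kS$ so as to cancel it. Producing either from $T(x)$ means computing the locally constant function $p^Nf\circ T$ with a network of width $d_x+1$, which is the statement you are proving.

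Your suggested escapes do not help. Recovering $x_1$ ``from the other channels'' is impossible, because they hold only $x_2,\ldots,x_{d_x}$ and scratch data. A ``final pReLU projection'' would also have to compute $\tilde S$. The same objection applies to anything else you park in channel $1$.

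\emph{The indicator gadget has independent defects.} In the conjunction step, both pre-activations $z$ and $z+1$ depend on the running product $t$. Evaluating $\mathrm{pReLU}(z)$ overwrites the only spare channel, which is where $t$ lives, and the only other place for $t$ is channel $1$, where it becomes unreadable. The shift $p^{-1}(1-t)$ is also wrong. Take $t=0$ and $x_{i+1}=a_{i+1}-p^{k-1}+p^ku$ with $u\in\mathbb Z_p$. Then $z=u\in\mathbb Z_p$, so $\mathrm{pReLU}(z)=u$ need not vanish, and $\mathrm{pReLU}(z+1)-\mathrm{pReLU}(z)=1$ off the ball. A shift of valuation less than $-k$, such as $p^{-k-1}$, would be required.

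\emph{The missing idea: no accumulator is needed.} One spare channel suffices to act on a coordinate \emph{in place} via $x\mapsto x-p^k\,\mathrm{pReLU}(p^{-k}(x-\alpha))$. This collapses $\alpha+p^k\mathbb Z_p$ to $\alpha$ and fixes every other point of $\mathbb Z_p$. Composing such maps over a set of representatives and over the coordinates replaces each $x_i$ by a representative $r(x_i)$. The linear map $\sum_i p^{(i-1)k}r(x_i)$ then packs an injective code of the ball into a single channel, with finite image.

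From there width $2$ is enough. A map $x\mapsto x+\mathrm{pReLU}(\lambda(x-\gamma))$ sends one chosen point to a chosen target while fixing a given finite set. Compositions of such maps interpolate any $\mathbb Z_p$-valued function on a finite subset of $\mathbb Z_p$, routing through an auxiliary disjoint set to avoid collisions. A final rescaling handles $\mathbb Q_p$-values. This encoding-plus-interpolation argument is the paper's proof.
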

This result implies that the characteristic function of any measurable subset of $\mathbb Z_p^{d_x}$ can be approximated arbitrarily well by
$\mathrm{pReLU}$-network of width $w$ which is also a characteristic function, so at least we do not need to apply a final decision function. The proof of Theorem \ref{upper2} proceeds in two steps. First for every positive integer $m$ we produce an encoding function, that is, a
$\mathrm{pReLU}$-network of width $d_x+1$ which is constant on the cosets of $p^m\mathbb Z_p^{d_x}$ and maps different cosets to different values in $\mathbb Z_p$. This reduces the theorem to the a priori much easier problem of interpolating $\mathbb Q_p$-valued functions on finite subsets of $\mathbb Z_p$ by $\mathrm{pReLU}$-networks of width $2$. 

We conclude the proof of the upper bound in Theorem \ref{main1.2} by constructing for every $m$ as above a decoding function from
$\mathbb Z_p$ into $\mathbb Z_p^{d_y}$, that is, a $\mathrm{pReLU}$-network of width $d_y$ whose image intersects each coset of $p^m\mathbb Z_p^{d_y}$ in $\mathbb Z_p$. The construction of the latter which is perhaps the most divergent from the constructions in \cite{HS} and \cite{PYLS} at the technical level, although as the readers will appreciate, the overall structure and trickery are very similar. 

The proof of a lower bound is always intriguing, since one needs to find an obstruction to the approximation. This could be a deep matter, since showing that something is not possible requires usually some invariant to measure the distance between possible solutions and the goal. The proof of the bound $w\geq d_y$ is easy, because we can use, just like in the real case, that otherwise the image of such neural networks lie in a proper affine subspace. The proof of the bound $w\geq d_x+1$ is more subtle, the key fact being the following result, which has no precise real analogue:
\begin{thm} Let $f$ be a $\mathrm{pReLU}$-network of input dimension and width $n$. Then either $f|_{\mathbb Z_p^n}$ is an affine map or there is a ball $B\subset\mathbb Z_p^n$ of radius $\frac{1}{p}$ such that $f$ is constant in some direction on $B$. 
\end{thm}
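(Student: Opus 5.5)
Write the network as $f=A_{L}\circ\sigma\circ A_{L-1}\circ\cdots\circ\sigma\circ A_1$, where each $A_i:\mathbb Q_p^{n}\to\mathbb Q_p^{n}$ is affine (width $n$, the last one possibly with a different target) and $\sigma$ is $\mathrm{pReLU}$ applied coordinatewise. I will argue by induction on the number $L$ of hidden layers, tracking on $\mathbb Z_p^n$ the composite $g_k=\sigma\circ A_k\circ\cdots\circ\sigma\circ A_1$. The key local fact is the following. The set $\{x:\ (A_1x)_j\in\mathbb Z_p\}$ is a ball condition on the linear form $x\mapsto (A_1x)_j$. Hence on every ball of radius $1/p$ in $\mathbb Z_p^n$, each coordinate of $\sigma\circ A_1$ behaves in one of two ways. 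Either it equals $(A_1x)_j$ on the whole ball. Or it vanishes on the whole ball. Or it switches between these, which can happen only when the linear part of $(A_1)_j$ is large, i.e.\ when $\ell_j$ has norm $>1$, with the switching determined by the level sets of $\ell_j$.

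First case: $A_1$ is not invertible. Then $A_1$ has a nontrivial kernel direction $v$. The map $\sigma\circ A_1$, and hence $f$, is constant in direction $v$ on all of $\mathbb Z_p^n$, so in particular on every ball of radius $1/p$.

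Second case: $A_1$ is invertible. Look at $A_1(\mathbb Z_p^n)$, which is a compact open lattice coset. If $A_1(\mathbb Z_p^n)\subseteq\mathbb Z_p^n$, then $\sigma$ acts as the identity there, and $f|_{\mathbb Z_p^n}$ equals a network with one fewer layer restricted to a compact open set. I then want to rescale it back to the $\mathbb Z_p^n$ case, in the spirit of Remark \ref{rem1.4}, and apply induction. If $A_1(\mathbb Z_p^n)\not\subseteq\mathbb Z_p^n$, some coordinate $j$ takes non-integral values on $\mathbb Z_p^n$. Then I choose a ball $B$ of radius $1/p$ on which $(A_1x)_j\notin\mathbb Z_p$ for all $x\in B$. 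On $B$ the $j$-th coordinate of $\sigma\circ A_1$ is identically $0$. So $\sigma\circ A_1$ maps $B$ into the hyperplane $\{y_j=0\}$, and since $B$ has dimension $n$, the map $\sigma\circ A_1|_B$ is constant along some direction. Making that last step precise needs care. On a possibly smaller subball, the other coordinates are each either affine or zero, so the map is affine with rank $\le n-1$ there and has a kernel direction. To get this on a full ball of radius $1/p$, I would pick $B$ as a fibre of the reduction of the large linear forms modulo suitable powers, or else weaken to a subball and check that the statement tolerates it.

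The main obstacle is the existence of such a ball $B$ on which $(A_1x)_j$ is \emph{uniformly} non-integral. Since $(A_1x)_j=\ell(x)+c$, the non-integral locus is a union of cosets of $\{x:\ell(x)\in\mathbb Z_p\}$. A priori this locus might not contain any ball of radius $1/p$, because $\ell$ could be non-integral only modulo a deep congruence while $c$ shifts things. I would handle this by a case split on $|c|$ and $\max|\ell_i|$. If $|c|>\max(1,|\ell|)$, the value is non-integral everywhere. If $|\ell|>1$, then the set $\{x:\ell(x)+c\in\mathbb Z_p\}$ lies in a single coset of a proper subgroup of index $\ge p$ modulo $p$ in the direction of maximal coefficient, so its complement contains a full residue ball of radius $1/p$. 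The remaining configurations reduce to the affine-image-in-$\mathbb Z_p^n$ case. Finally, the induction must also handle the rescaling issue, so that "affine on $\mathbb Z_p^n$" and "ball of radius $1/p$" are preserved when passing to the rescaled subnetwork. I expect to strengthen the inductive hypothesis to arbitrary balls in place of $\mathbb Z_p^n$, with radius one $p$-th of the ball's radius.
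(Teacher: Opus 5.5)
Your skeleton matches the paper's. You peel off layers whose affine image stays in $\mathbb Z_p^n$, then find a ball of radius $\frac1p$ on which one pre-activation coordinate is uniformly non-integral. Your case analysis on $|c|$ and $|\ell|$ for producing that ball is correct; it is a hands-on version of Proposition \ref{pidgeon} applied to the convex set $\{x\in\mathbb Z_p^n:(A_1x)_j\in\mathbb Z_p\}$.

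\textbf{The gap is the step after that.} ``$\sigma\circ A_1$ maps $B$ into $\{y_j=0\}$ and $B$ has dimension $n$'' does not give a direction of constancy. For $n\geq 2$, a continuous map of a ball into a hyperplane can even be injective, by Theorem \ref{brouwer}. Your two repairs do not fix this. A subball on which the other coordinates are each affine or zero, or a fibre of reduction modulo higher powers, only yields a ball and a direction of size about $\min_{j'}|\ell_{j'}|^{-1}$. That size depends on the weights, so it is not the statement. The fixed radius $\frac1p$ is exactly what makes $\delta$ in Proposition \ref{uniform2} uniform over all networks.

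\textbf{The missing idea.} Because pReLU acts coordinatewise, on $B$ the vector $\sigma(A_1x)$ depends on $x$ only through the $n-1$ numbers $(A_1x)_{j'}=\ell_{j'}(x)+c_{j'}$ with $j'\neq j$. Choose $0\neq h\in\bigcap_{j'\neq j}\ker\ell_{j'}$, which is nonzero by a dimension count; this is the paper's $h\in l^{-1}(L_i)$. Rescale it so that $\|h\|=\frac1p$. For $x\in B$ we also have $x+h\in B$. The pre-activations at $x$ and $x+h$ coincide in every coordinate $j'\neq j$, so their pReLU values coincide whether or not they are integral. Both $j$-th coordinates are non-integral, so both are sent to $0$. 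Hence $\sigma\circ A_1$, and therefore $f$, is constant in direction $h$ on all of $B$. This needs no affineness of the other coordinates, no shrinking of the ball, and no case split on the invertibility of $A_1$.

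\textbf{The inductive step.} In the case $A_1(\mathbb Z_p^n)\subseteq\mathbb Z_p^n$, your plan is also off track. The set $A_1(\mathbb Z_p^n)$ is a lattice coset and generally not a ball. So neither the scaling of Remark \ref{rem1.4} nor an ``arbitrary balls'' hypothesis applies, and balls of radius $\frac1p$ would not be transported to balls of radius $\frac1p$. None of this is needed. Since $\sigma$ is the identity on $\mathbb Z_p^n$, we have $A_2\circ\sigma\circ A_1=A_2\circ A_1$ on $\mathbb Z_p^n$. So $f|_{\mathbb Z_p^n}$ agrees, on the same domain $\mathbb Z_p^n$, with the width-$n$ network that has one fewer hidden layer and first affine map $A_2\circ A_1$. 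The inductive hypothesis then applies verbatim. The paper phrases this as taking the smallest $k$ with $t_k\circ\cdots\circ t_1(\mathbb Z_p^n)\not\subseteq\mathbb Z_p^n$.
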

Here constancy in some direction means that there is an $h\in\mathbb Q_p^n$ of norm $\frac{1}{p}$ such that $f(x+h)=f(x)$ for every $x\in B$. This claim was inspired by the proof Lemma 6 of \cite{HS}, although our actual argument is rather algebraic. However we tried to phrase it in a language which emphasises the analogy. We apply it to homeomorphisms $\mathbb Z_p^{d_x}\to\mathbb Z_p$ to derive a contradiction. The existence of the latter might be surprising to those who are unfamiliar $p$-adic analysis, but it is a well-known fact.  
\begin{ackn} 
S\'andor Z. Kiss was supported by the NKFIH Grants No. K, K146387, KKP 144059 and the National Research, Development and Innovation Office NKFIH (Excellence program, Grant Nr. 153829)
\end{ackn}
  
\section{Lower bound} 
 
We start with an overview of some geometry in vector spaces over $\mathbb Q_p$.
\begin{defn} Let $V$ be a finite dimensional $\mathbb Q_p$-linear vector space. We say that a subset $C\subset V$ is convex if it is empty or a coset of a $\mathbb Z_p$-submodule of $V$. Note that $V$ has a canonical topology which is induced by any norm on $V$, since all of these are equivalent, by the local compactness of $\mathbb Q_p$. When we talk about continuity, we mean this topology. With respect to this topology every linear map is continuous. 
\end{defn}
\begin{example} Let $|\cdot|$ denote the norm on $\mathbb Q_p$ normalised so that $|p|=\frac{1}{p}$. Then $\|\cdot\|$ on $\mathbb Q_p^n$ given by the rule
$$\|(x_1,\ldots,x_n)\|=\max(|x_1|,\ldots,|x_n|)$$
is a norm. For every $v\in\mathbb Q_p^n$ and real $\rho>0$ let
$$B(v,\rho)=\{u\in\mathbb Q_p^n\mid \|u-v\|\leq\rho\}
\quad\textrm{and}\quad
S(v,\rho)=\{u\in\mathbb Q_p^n\mid \|u-v\|=\rho\}$$
be the ball and sphere of centre $v$ and radius $\rho$, respectively. Then $B(v,\rho)$ is both compact and convex, while $S(v,\rho)$ is compact, but not convex. In particular $\mathbb Z_p^n=B(0,1)\subset\mathbb Q_p^n$ is compact and convex.
\end{example}
The following claims are well-known, but we include it for the reader's convenience. 
\begin{lemma}\label{convex} Let $V$ be as above. The following holds:
\begin{enumerate}
\item[$(i)$] if $f:V\to W$ is an affine map between finite dimensional
$\mathbb Q_p$-linear vector spaces, and $C\subset W$ is convex, then $f^{-1}(C)$ is also convex,
\item[$(ii)$] the intersection of convex subsets of $V$ are convex.
\end{enumerate}
\end{lemma}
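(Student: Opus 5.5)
Both parts follow directly from the definition of convexity: a subset is convex if it is empty or a coset of a $\mathbb Z_p$-submodule. The plan is to handle the empty case separately each time, and otherwise to write each set as $v+M$ with $M$ a $\mathbb Z_p$-submodule. The key observation is that a nonempty convex set $C$ determines its submodule: $M=C-c$ for any $c\in C$, and $M$ does not depend on the choice of $c$. This holds because if $C=v+M$ and $c=v+m_0\in C$, then $C-c=M-m_0=M$.

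For $(i)$, write $f(x)=L(x)+b$ with $L:V\to W$ linear and $b\in W$. Suppose $C=w+N$ with $N\subseteq W$ a $\mathbb Z_p$-submodule, and $f^{-1}(C)\neq\emptyset$; otherwise there is nothing to prove. Pick $x_0$ with $f(x_0)\in C$, so that $C=f(x_0)+N$. Then
$$f^{-1}(C)=\{x\in V\mid L(x-x_0)\in N\}=x_0+L^{-1}(N).$$
Here $L^{-1}(N)$ is a $\mathbb Z_p$-submodule of $V$, since $L$ is $\mathbb Z_p$-linear: it contains $0$ and is closed under addition and multiplication by scalars in $\mathbb Z_p$. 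So $f^{-1}(C)$ is a coset of a submodule, hence convex.

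For $(ii)$, let $\{C_i\}_{i\in I}$ be convex and suppose their intersection is nonempty; otherwise it is convex by definition. Pick $c$ in every $C_i$. By the observation above, $C_i=c+M_i$ with $M_i=C_i-c$ a submodule. Then
$$\bigcap_i C_i=c+\bigcap_i M_i,$$
and an intersection of $\mathbb Z_p$-submodules is again a submodule. This gives convexity.

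There is no real obstacle here. The only point needing care is re-centering the cosets at a common point, which is exactly the observation that a nonempty convex set determines its submodule. Note also that the arguments do not use continuity, and $I$ may be arbitrary (even infinite).
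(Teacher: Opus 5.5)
Your proposal is correct and follows essentially the same route as the paper: treat the empty case separately, re-center each coset at a common point, and use that preimages and intersections of $\mathbb Z_p$-submodules are again submodules. The only differences are cosmetic. You handle the affine map directly via $f^{-1}(C)=x_0+L^{-1}(N)$ instead of first reducing to the linear case by translation, and you explicitly prove that a nonempty convex set can be re-centered at any of its points, which the paper takes for granted.
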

\begin{proof} Claim $(i)$ is trivial when $C$ is empty. Otherwise $C$ is of the form $L+u$, where $L$ is a $\mathbb Z_p$-submodule of $W$ and $u$ is any element of $C$. Note that for every $w\in W$ the translate $C+w=L+u+w$ is also convex by definition, so we may assume without the loss of generality that $f$ is $\mathbb Q_p$-linear. Then $f^{-1}(L)$ is a $\mathbb Z_p$-submodule of $V$. If the image of $f$ does not intersect  $C$ then $f^{-1}(C)$ is empty, so $(i)$ holds. Otherwise there is a $v\in V$ such that $f(v)\in C$ and hence $C=L+f(v)$. Since $f$ is additive, we get that $f^{-1}(C)=f^{-1}(L+f(v))=f^{-1}(L)+v$, so it is is convex. 

Now let $\{C_i\}_{i\in I}$ be a set of convex subsets of $V$. If their intersection is empty, claim $(ii)$ holds, so we may assume that there is an $h\in\bigcap_{i\in I}C_i$ without the loss of generality. Then $h\in C_i$ for each $i\in I$, so for every such index there is a $\mathbb Z_p$-submodule $L_i$ of $V$ such that $C_i=L_i+h$. Then
$$\bigcap_{i\in I}C_i=\bigcap_{i\in I}(L_i+h)=\bigcap_{i\in I}L_i+h.$$
Since $\bigcap_{i\in I}L_i$ is the intersection of $\mathbb Z_p$-submodules of $V$, it is a $\mathbb Z_p$-submodule of $V$, and hence 
$\bigcap_{i\in I}C_i$ is convex.
\end{proof}
\begin{prop}\label{pidgeon} Let $C$ be a proper convex subset of $\mathbb Z_p^n$. Then there is a ball of radius $\frac{1}{p}$ in $\mathbb Z_p^n$ disjoint from $C$. 
\end{prop}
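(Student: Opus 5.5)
Balls of radius $\frac{1}{p}$ in $\mathbb Z_p^n$ are exactly the cosets of $p\mathbb Z_p^n$, so they correspond to the points of $\mathbb F_p^n=\mathbb Z_p^n/p\mathbb Z_p^n$. The plan is therefore to push $C$ down to $\mathbb F_p^n$, show its image is a proper coset there, and pick a point outside that image.

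If $C$ is empty any ball works, so assume $C=L+u$ with $L$ a $\mathbb Z_p$-submodule of $\mathbb Q_p^n$ and $u\in C$. Since $C\subseteq\mathbb Z_p^n$ and $u\in\mathbb Z_p^n$, we get $L=C-u\subseteq\mathbb Z_p^n$, so $L$ is a submodule of $\mathbb Z_p^n$. Moreover $L\neq\mathbb Z_p^n$, since otherwise $C=\mathbb Z_p^n$, contradicting properness.

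Let $\pi:\mathbb Z_p^n\to\mathbb F_p^n$ be reduction mod $p$. A ball $B(v,\frac1p)$ with $v\in\mathbb Z_p^n$ is the coset $v+p\mathbb Z_p^n$, and it is disjoint from $C$ exactly when $\pi(v)\notin\pi(C)=\pi(L)+\pi(u)$. So it suffices to show that $\pi(L)$ is a proper subspace of $\mathbb F_p^n$; its translate $\pi(C)$ then misses some point $\pi(v)$, and $B(v,\frac1p)$ is the required ball.

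The key step, and the only real content, is showing $\pi(L)\neq\mathbb F_p^n$. Suppose $\pi(L)=\mathbb F_p^n$, i.e. $L+p\mathbb Z_p^n=\mathbb Z_p^n$. Then Nakayama's lemma, applied to the finitely generated $\mathbb Z_p$-module $\mathbb Z_p^n/L$ over the local ring $\mathbb Z_p$, gives $L=\mathbb Z_p^n$, a contradiction.

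To avoid citing Nakayama, the same conclusion follows from a direct approximation argument. Lift a basis of $\mathbb F_p^n$ to vectors $l_1,\dots,l_n\in L$. The matrix with columns $l_i$ reduces mod $p$ to an invertible matrix, so its determinant is a unit in $\mathbb Z_p$. Hence the matrix lies in $GL_n(\mathbb Z_p)$, and the $l_i$ span $\mathbb Z_p^n$ over $\mathbb Z_p$. Since $L$ is a $\mathbb Z_p$-module containing every $l_i$, this gives $L=\mathbb Z_p^n$, again a contradiction. This completes the argument.
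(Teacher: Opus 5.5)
Your proof is correct. It establishes the same key fact as the paper, namely that a proper submodule $L\subsetneq\mathbb Z_p^n$ satisfies $L+p\mathbb Z_p^n\neq\mathbb Z_p^n$, but by a different route.

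The paper works inside $\mathbb Z_p^n$:
\begin{enumerate}
\item[$(1)$] It enlarges $L$ to a maximal proper submodule $C'$, using that $\mathbb Z_p^n$ is Noetherian.
\item[$(2)$] It identifies the simple quotient $\mathbb Z_p^n/C'$ with $\mathbb Z_p/p\mathbb Z_p$, via the structure theorem and the fact that $\mathbb Z_p$ is local.
\item[$(3)$] It concludes $p\mathbb Z_p^n\subseteq C'$ and takes the ball $x+p\mathbb Z_p^n\subseteq x+C'$ for any $x\notin C'$.
\end{enumerate}
You instead pass to $\mathbb F_p^n$ and prove directly that $\pi(L)$ is proper, either by Nakayama or by the determinant criterion for membership in $GL_n(\mathbb Z_p)$.

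Your determinant version is the more elementary and self-contained of the two. It needs no Noetherianity, no existence of maximal submodules and no structure theorem; it only uses that an integral matrix whose determinant is a unit has an integral inverse. The paper's version instead exhibits an explicit index-$p$ submodule containing $C$, the preimage of a hyperplane of $\mathbb F_p^n$. This makes the pigeonhole picture visible without reducing mod $p$.

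The two arguments are really the same idea seen on either side of $\pi$. The maximal-submodule step is one standard proof of exactly the instance of Nakayama's lemma you invoke.
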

\begin{proof} The claim is trivial when $C$ is empty. If $C$ is not empty, then it is of the form $L+u$, where $L$ is a $\mathbb Z_p$-submodule of $\mathbb Q_p^n$ and $u$ is any element of $C$. In this case $L=C-u$. As $\mathbb Z_p^n$ is a subgroup of $\mathbb Q_p^n$ we get that $L\subset\mathbb Z_p^n$. If $B$ is a ball of radius $\frac{1}{p}$ in
$\mathbb Z_p^n$ disjoint from $L$, then $B+u$ is a ball of radius $\frac{1}{p}$ in $\mathbb Z_p^n$ disjoint from $C$, so we may assume that $C$ is a proper $\mathbb Z_p$-submodule of $\mathbb Z_p^n$ without the loss of generality. 

Since $\mathbb Z_p$ is Noetherian and $\mathbb Z_p^n$ is finitely generated, there is a proper maximal $\mathbb Z_p$-submodule $C'
\subset\mathbb Z_p^n$ containing $C$. We may assume without the loss of generality that $C=C'$. Then the quotient $\mathbb Z_p^n/C$ is a simple $\mathbb Z_p$-module. It is also finitely generated, since it is the quotient of a finitely generated $\mathbb Z_p$-module. Since $\mathbb Z_p$ is a principal ideal domain, by the structure theorem for finitely generated modules over a principal ideal domain, the quotient $\mathbb Z_p^n/C$ is a finite direct sum of cyclic $\mathbb Z_p$-modules.

Since $\mathbb Z_p^n/C$ is also simple, it must be cyclic. A simple cyclic $\mathbb Z_p$-module is of the form $\mathbb Z_p/\mathfrak m$, where $\mathfrak m\triangleleft\mathbb Z_p$ is a maximal ideal. But $\mathbb Z_p$ is a local ring, so its unique maximal ideal is $(p)$, and hence
$\mathbb Z_p^n/C$ is $\mathbb Z_p/p\mathbb Z_p$. Therefore $C$ contains $p\mathbb Z_p^n$. Let $x\in\mathbb Z_p^n-C$; since $C$ is a $\mathbb Z_p$-submodule of $\mathbb Z_p^n$ the coset $C+x$ is disjoint from $C$. By the above the ball $p\mathbb Z_p^n+x$ of radius $\frac{1}{p}$ is a subset of $C+x$, so it is also disjoint from $C$.
\end{proof}
\begin{thm}\label{brouwer} There is a homeomorphism $\mathbb Z_p^n\to\mathbb Z_p^m$ for every pair of positive integers $m,n$.  
\end{thm}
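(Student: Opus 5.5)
It suffices to construct a homeomorphism $\mathbb Z_p^n\to\mathbb Z_p$ for every positive integer $n$: composing one for $n$ with the inverse of one for $m$ gives the claim. The plan is to use the $p$-adic digit expansion and interleave digits, the $p$-adic analogue of the classical map $[0,1]^2\to[0,1]$. This works here, with no ambiguity of expansions, precisely because $\mathbb Z_p$ is totally disconnected.

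Fix the digit set $D=\{0,1,\ldots,p-1\}$. Every $x\in\mathbb Z_p$ has a unique expansion $x=\sum_{k\ge 0}a_kp^k$ with $a_k\in D$, and the map $x\mapsto(a_k)_{k\ge0}$ is a bijection $\mathbb Z_p\to D^{\mathbb N}$. It is a homeomorphism when $D^{\mathbb N}$ carries the product topology of discrete factors. Indeed, the condition $|x-y|\le p^{-m}$ holds if and only if the first $m$ digits of $x$ and $y$ agree, so the balls of radius $p^{-m}$ correspond exactly to the basic cylinder sets fixing the first $m$ coordinates. (Alternatively, check continuity via these cylinders and invoke compactness of $\mathbb Z_p$ together with the Hausdorff property of the target.) Taking products, $\mathbb Z_p^n$ is homeomorphic to $(D^{\mathbb N})^n$ with the product topology. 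The norm $\|\cdot\|$ from the Example induces the product topology on $\mathbb Z_p^n$, since its balls are products of balls.

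Next I define $\Phi\colon (D^{\mathbb N})^n\to D^{\mathbb N}$ by interleaving: for sequences $(a^{(1)},\ldots,a^{(n)})$, set $\Phi(\ldots)_{kn+i-1}=a^{(i)}_k$ for $k\ge0$ and $1\le i\le n$. This is a bijection, because each index $j\ge 0$ is uniquely of the form $kn+i-1$. Concretely, the composite map $\mathbb Z_p^n\to\mathbb Z_p$ is
$$(x_1,\ldots,x_n)\mapsto\sum_{k\ge0}\sum_{i=1}^n a^{(i)}_k p^{kn+i-1}.$$
To see that $\Phi$ is continuous, note that each output coordinate is a single input coordinate, so $\Phi$ is continuous coordinatewise. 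The same argument applies to $\Phi^{-1}$. Metrically, $\|x-y\|\le p^{-m}$ implies $|\Phi(x)-\Phi(y)|\le p^{-mn}$, and $|\Phi(x)-\Phi(y)|\le p^{-mn}$ implies $\|x-y\|\le p^{-m}$.

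The only point needing care is the identification of $\mathbb Z_p$ with $D^{\mathbb N}$ as topological spaces, namely that the balls correspond to cylinders. This is standard and follows from the definition of $|\cdot|$ and the uniqueness of digit expansions. I expect no genuine obstacle beyond writing this down cleanly.
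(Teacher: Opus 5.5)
Your proof is correct, but it takes a different route from the paper. The paper just cites Brouwer's characterisation of the Cantor set (Willard, Theorem 30.3): any two metrisable, compact, totally disconnected spaces without isolated points are homeomorphic, and $\mathbb Z_p^n$ and $\mathbb Z_p^m$ are such spaces.

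You instead build a homeomorphism $\mathbb Z_p^n\to\mathbb Z_p$ explicitly by interleaving $p$-adic digits. This is elementary and self-contained. The only inputs are the existence and uniqueness of digit expansions and the fact that $|x-y|\le p^{-k}$ holds exactly when the first $k$ digits agree.

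The citation is shorter and more general. It applies verbatim to arbitrary compact open subsets of $\mathbb Q_\ell^m$ for any prime $\ell$, where digit interleaving is not directly available. On the other hand, it rests on a nontrivial classification theorem.

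Your construction gives quantitative information as a bonus. If $\|x-y\|=p^{-k}$, your argument shows that the output digits agree in positions $0,\ldots,kn-1$ and differ somewhere in positions $kn,\ldots,kn+n-1$. Hence $p^{1-n}\|x-y\|^n\le|\Phi(x)-\Phi(y)|\le\|x-y\|^n$, so the map and its inverse are H\"older with explicit constants. For example, for $f=\Phi$ one can take $\epsilon=p^{1-2n}$ in Proposition~\ref{uniform1}, where the paper only obtains some $\epsilon>0$ by compactness.
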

\begin{proof} This claim is a special case of what is known as Brouwer's theorem (see Theorem 30.3 of \cite{Wi} on page 216): let $X,Y$ be metrisable totally disconnected compact topological spaces without isolated points. Then $X$ and $Y$ are homeomorphic. 
\end{proof}
\begin{prop}\label{uniform1} Let $f:\mathbb Z_p^n\to\mathbb Z_p$ be an injective continuous function. Then there is an $\epsilon>0$ such that $|f(x+h)-f(x)|\geq\epsilon$ for every $x\in\mathbb Z_p^n$ and every $h\in S(0,\frac{1}{p})$. 
\end{prop}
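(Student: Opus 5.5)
The plan is a standard compactness argument. Consider the set
$$K=\{(x,h)\in\mathbb Z_p^n\times\mathbb Q_p^n\mid h\in S(0,\tfrac{1}{p})\},$$
which is $\mathbb Z_p^n\times S(0,\frac1p)$. The first factor is compact, and the sphere $S(0,\frac1p)$ is compact as noted in the example above. Hence $K$ is a compact subset of $\mathbb Q_p^n\times\mathbb Q_p^n$.

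Note that for $x\in\mathbb Z_p^n$ and $\|h\|=\frac1p\leq 1$ we have $x+h\in\mathbb Z_p^n$, by the ultrametric inequality. So the function
$$g:K\to\mathbb R,\qquad g(x,h)=|f(x+h)-f(x)|$$
is well defined. It is continuous, being a composition of continuous maps: addition, $f$, subtraction and the absolute value.

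Next we check that $g$ never vanishes on $K$. If $g(x,h)=0$, then $f(x+h)=f(x)$. Injectivity of $f$ then gives $x+h=x$, i.e. $h=0$. This is impossible, since $\|h\|=\frac1p\neq 0$. Thus $g>0$ everywhere on $K$.

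A continuous positive real function on a nonempty compact space attains its minimum, so $\epsilon:=\min_K g>0$ has the required property. (The case of empty $K$ does not arise, since $n\geq1$.)

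There is no genuine obstacle here. The only point needing care is the compactness of $S(0,\frac1p)$. It is closed, because it is the preimage of $\{\frac1p\}$ under the continuous norm (or because $|\cdot|$ takes discrete values, so the sphere is also open). It is contained in the compact ball $B(0,\frac1p)$, and so it is compact.

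The real content of the statement is that injectivity alone, without uniform continuity of the inverse, yields a uniform lower bound on a fixed "scale" $\frac1p$. This follows entirely from compactness of $\mathbb Z_p^n$ and of the sphere.
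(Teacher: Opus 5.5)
Your proposal is correct and follows the paper's proof: you take the continuous function $(x,h)\mapsto|f(x+h)-f(x)|$ on the compact set $\mathbb Z_p^n\times S(0,\frac{1}{p})$, show it is positive by injectivity, and conclude it attains a positive minimum. You also verify explicitly that $x+h\in\mathbb Z_p^n$ and that $S(0,\frac{1}{p})$ is compact, which the paper leaves implicit.
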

\begin{proof} Note that the function $g:\mathbb Z_p^n\times
S(0,\frac{1}{p})\to\mathbb R$ given by the rule
$$(x,h)\mapsto|f(x+h)-f(x)|$$
is continuous, and only takes positive values, since $f$ is injective. Therefore by the compactness of its domain $\mathbb Z_p^n\times
S(0,\frac{1}{p})$ the function $g$ takes a positive minimal value. 
\end{proof}
\begin{defn} The topological group $\mathbb Z_p^n$ is compact, so it has a unique unimodular Haar measure which we will denote by $\mu$. For every $q\geq1$ and continuous function $f:\mathbb Z_p^n\to\mathbb Q_p^m$ we define the $L_q$-norm of $f$ as
$$\|f\|_q=\left(\int_{\mathbb Z_p^n}\|f(x)\|^qd\mu(x)\right)^{\frac{1}{q}}.$$
We also define the $L_{\infty}$-norm of $f$ as
$$\|f\|_q=\sup_{x\in \mathbb Z_p^n}\|f(x)\|.$$
Since $\mathbb Z_p^n$ is compact and $\|f(x)\|$ is continuous, the $L_{\infty}$-norm is finite. Moreover for every $q_1,q_2\in[1,\infty]$ with $q_1\leq q_2$ we have 
$$\|f\|_{q_1}\leq\|f\|_{q_2}$$
as a consequence of H\"older's inequality and the unimodularity assumption $\mu(\mathbb Z_p^n)=1$, so the $L_q$-norm is finite for every $q\in[1,\infty]$. 
\end{defn}
\begin{rem}\label{rem2.8} Although we can define $L_q$-norms on $\mathbb Q_p$-valued continuous functions on $\mathbb Z_p^n$ it is not possible to define a $\mathbb Q_p$-valued integration for such functions. The precise claim is the following: let $I:C(\mathbb Z_p^n,\mathbb Q_p)
\to\mathbb Q_p$ be a $\mathbb Q_p$-linear map which is continuous with respect to the $L_{\infty}$-norm and invariant with respect to translations. Then $I$ is identically zero. 
\end{rem}
\begin{defn} Let $B$ be a ball of radius $\rho>0$. We say that a function $g:B\to\mathbb Q_p$ is constant in some direction on $B$ if there is an 
$h\in S(0,\rho)$ such that $g(x+h)=g(x)$ for every $x\in B$. Note that by the ultrametric inequality we have $x+h\in B$ if $x\in B$ and $h\in S(0,\rho)$, so our condition is well-posed. 
\end{defn}
\begin{prop}\label{uniform2} Let $f:\mathbb Z_p^n\to\mathbb Z_p$ be an injective continuous function. Then there is an $\delta>0$ such that for every continuous function $g:\mathbb Z_p^n\to\mathbb Q_p$ which is constant in some direction on a ball of radius $\frac{1}{p}$ in $\mathbb Z_p^n$ we have $\|f(x)-g(x)\|_1\geq\delta$. 
\end{prop}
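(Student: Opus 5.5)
Let $\epsilon>0$ be the constant from Proposition \ref{uniform1} for the injective continuous $f$. I will show that $\delta=\frac{\epsilon}{2}\cdot p^{-n}$ (or something of this shape) works. Here $p^{-n}$ is the Haar measure of a ball of radius $\frac{1}{p}$ in $\mathbb Z_p^n$. This ball has index $p^n$ in $\mathbb Z_p^n$, and $\mu(\mathbb Z_p^n)=1$.

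The key pointwise inequality is as follows. Let $g$ be constant in direction $h\in S(0,\frac{1}{p})$ on a ball $B\subseteq\mathbb Z_p^n$ of radius $\frac{1}{p}$. For every $x\in B$ we have $x+h\in B$ and $g(x+h)=g(x)$, so the ultrametric (or ordinary triangle) inequality gives
$$\epsilon\leq|f(x+h)-f(x)|\leq |f(x+h)-g(x+h)|+|g(x)-f(x)|.$$
Hence at least one of $|f(x)-g(x)|$ and $|f(x+h)-g(x+h)|$ is at least $\epsilon/2$. Indeed, by the strong triangle inequality even $\max\geq\epsilon$, which gives the better constant $\epsilon$ in place of $\epsilon/2$.

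Next I integrate this over $B$. The map $x\mapsto x+h$ is a measure-preserving bijection $B\to B$, because Haar measure is translation invariant. Writing $\phi(x)=|f(x)-g(x)|$, we get
$$2\int_B\phi\,d\mu=\int_B\big(\phi(x)+\phi(x+h)\big)\,d\mu(x)\geq\int_B\max(\phi(x),\phi(x+h))\,d\mu(x)\geq\epsilon\,\mu(B)=\epsilon p^{-n}.$$
Therefore
$$\|f-g\|_1\geq\int_B\phi\,d\mu\geq\frac{\epsilon}{2p^n},$$
so we may take $\delta=\epsilon/(2p^n)$. This constant depends only on $f$, since $\epsilon$ does and $n$ is fixed. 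That uniformity over all $g$ and all balls is what the statement asks for.

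I expect no serious obstacle. The only points that need care are the following. First, $\phi$ is continuous and hence integrable. Second, the translation $x\mapsto x+h$ maps $B$ onto itself, which follows from the ultrametric property noted in the definition of constancy in a direction. Third, $\mu(B)=p^{-n}$, because $\mathbb Z_p^n$ is the disjoint union of $p^n$ translates of $p\mathbb Z_p^n$, all of equal measure. Finally, since $\|\cdot\|_1\leq\|\cdot\|_q$ for all $q\in[1,\infty]$, the same $\delta$ works for every $L_q$ norm, which is what is needed for the application to Theorem \ref{main1.2}.
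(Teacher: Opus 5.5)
Your proof is correct and follows the paper's argument essentially step for step. The paper also takes $\epsilon$ from Proposition \ref{uniform1}, combines $g(x+h)=g(x)$ on $B$ with the triangle inequality and translation invariance of $\mu$ to get $2\int_B|f-g|\,d\mu\geq\epsilon\,\mu(B)=\epsilon p^{-n}$, and arrives at the same $\delta=\epsilon/(2p^n)$; the only difference is that it phrases this as a contradiction rather than a direct estimate.
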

\begin{proof} By Proposition \ref{uniform1} there is an $\epsilon>0$ such that $|f(x+h)-f(x)|\geq\epsilon$ for every $x\in\mathbb Z_p^n$ and every $h\in S(0,\frac{1}{p})$. Let $g:\mathbb Z_p^n\to\mathbb Q_p$ be a continuous function such that for the ball $B\subset\mathbb Z_p^n$ of radius $\frac{1}{p}$ and $h\in S(0,\frac{1}{p})$ we have $g(x+h)=g(x)$ for every $x\in B$. We will show that $\|f(x)-g(x)\|_1\geq\frac{\epsilon}{2p^n}$. Assume the contrary; then 
$$\frac{\epsilon}{2p^n}>\|f(x)-g(x)\|_1=\int_{\mathbb Z_p^n}\|f(x)-g(x)\|d\mu(x)
\geq\int_{B}\|f(x)-g(x)\|d\mu(x),$$
and
\begin{align*}
\frac{\epsilon}{p^n}&>\int_{B}\|f(x)-g(x)\|d\mu(x)+\int_{B}\|f(x+h)-g(x+h)\|d\mu(x)  \\
& \geq\int_{B}\|f(x)-f(x+h)+g(x+h)-g(x)\|d\mu(x) \\
& =\int_{B}\|f(x)-f(x+h)\|d\mu(x)\geq\epsilon\cdot\mu(B),
\end{align*}
using the translation-invariance of the Haar measure in the first line, the triangle inequality for the norm $\|\cdot\|$ in the second inequality and using that $g(x+h)=g(x)$ for every $x\in B$ in the first equation in the third line. Since $\mathbb Z_p^n$ is the disjoint union of $p^n$ cosets of $p\mathbb Z_p^n$, and since the Haar measure is translation-invariant, we get that $\mu(B)=\mu(p\mathbb Z_p^n)=\frac{1}{p^n}$, which is a contradiction. 
\end{proof}
\begin{defn} By an activation function we mean a continuous function $f:\mathbb Q_p\to\mathbb Q_p$. The main example of activation function we will concern ourselves with is pReLU which is defined as
$$\mathrm{pReLU}(x)=\begin{cases}
    x & \text{, if } x\in\mathbb Z_p,\\
    0 & \text{, otherwise. } \end{cases}$$
\end{defn}
\begin{defn} Given a set of activation functions $\Sigma$, an $L$-layer $\Sigma$-neural network $f$ of input dimension $d_x$, output dimension $d_y$, and hidden layer dimensions $d_1,\ldots,d_{L-1}$ is composition
$$f=t_L\circ\Sigma_{L-1}\circ\cdots\circ t_2 \circ\Sigma_1 \circ t_1
:\mathbb Q_p^{d_x}\to\mathbb Q_p^{d_y},$$
where $t_l:\mathbb Q_p^{d_{l-1}}\to\mathbb Q_p^{d_l}$ is an affine map and $\Sigma_l$ is a vector of activation functions:
$$\Sigma_l(x_1,\ldots,x_{d_l}) =(\rho^l_1(x_1),\ldots,\rho^l_{d_l}(x_{d_l})),$$
where $\rho^l_i\in\Sigma$. When $\Sigma$ is a singleton, that is, 
$\Sigma=\{\rho\}$, we will call $\Sigma$-neural networks simply 
$\rho$-networks. We define the width $w$ of $f$ as the maximum over $d_1,\ldots,d_{L-1}$.
\end{defn}
The following result is the key step in our proof:
\begin{thm}\label{key} Let $f$ be a $\mathrm{pReLU}$-network of input dimension and width $n$. Then either $f|_{\mathbb Z_p^n}$ is an affine map or there is a ball $B\subset\mathbb Z_p^n$ of radius $\frac{1}{p}$ such that $f$ is constant in some direction on $B$. 
\end{thm}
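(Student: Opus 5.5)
We argue by induction on the number of layers $L$, following the layers from the input. Write $f=t_L\circ\Sigma_{L-1}\circ g$, where $g=t_{L-1}\circ\cdots\circ\Sigma_1\circ t_1$ is a network of input dimension $n$ and width $n$ with fewer layers, and $t_{L-1}$ lands in $\mathbb Q_p^{d_{L-1}}$ with $d_{L-1}\le n$. Two elementary observations drive the argument. First, constancy in some direction is inherited under post-composition: if $g(x+h)=g(x)$ on $B$, then $(\phi\circ g)(x+h)=(\phi\circ g)(x)$ for any map $\phi$. Hence, if the inductive hypothesis gives a ball $B$ on which $g$ is constant in some direction, we are done. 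So we may assume that $g|_{\mathbb Z_p^n}$ is affine, say $g(x)=Ax+b$ with $A$ a $d_{L-1}\times n$ matrix.

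Now consider $\Sigma_{L-1}\circ g$ coordinatewise. For each coordinate $i$, set
\[
C_i=\{x\in\mathbb Z_p^n : (Ax+b)_i\in\mathbb Z_p\}.
\]
By Lemma \ref{convex}(i), applied to the affine map $x\mapsto(Ax+b)_i$ and the convex set $\mathbb Z_p$, together with Lemma \ref{convex}(ii) for the intersection with $\mathbb Z_p^n$, each $C_i$ is convex. On $C_i$ the $i$-th coordinate of $\mathrm{pReLU}(g(x))$ equals $(Ax+b)_i$, and off $C_i$ it is $0$.

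\textbf{Case 1: every $C_i=\mathbb Z_p^n$.} Then $\Sigma_{L-1}\circ g$ agrees with $g$ on $\mathbb Z_p^n$. Hence $f|_{\mathbb Z_p^n}=t_L\circ g$ is affine.

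\textbf{Case 2: some $C_i$ is proper.} Proposition \ref{pidgeon} gives a ball $B_0=x_0+p\mathbb Z_p^n$ disjoint from $C_i$, so the $i$-th coordinate of $\Sigma_{L-1}\circ g$ vanishes on $B_0$. We would like to find, inside a single ball of radius $\frac1p$, a direction $h\in S(0,\frac1p)$ along which every coordinate of $\Sigma_{L-1}\circ g$ is unchanged.

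\emph{Width bound.} Since $d_{L-1}\le n$, the $i$-th coordinate being identically zero leaves at most $n-1$ coordinates that can vary on $B_0$. Take $h\in p\mathbb Z_p^n\setminus p^2\mathbb Z_p^n$ with $a_j\cdot h=0$ for the rows $a_j$ ($j\ne i$) of $A$. Such an $h$ exists because the $n-1$ linear forms have a nonzero common kernel vector over $\mathbb Q_p$, which we scale to norm exactly $\frac1p$.

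\emph{Checking each coordinate.} For $j\ne i$ we have $(A(x+h)+b)_j=(Ax+b)_j$, so the $j$-th coordinate is literally unchanged. The $i$-th coordinate is $0$ at both $x$ and $x+h$, since $x+h\in B_0$ and $B_0\cap C_i=\emptyset$. Thus $\Sigma_{L-1}\circ g$, and hence $f$, is constant in direction $h$ on $B_0$. If $d_{L-1}<n$, the same kernel argument already works in Case 2, and only needs the proper $C_i$ to supply a ball where that coordinate is zero.

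\textbf{Base case.} For $L=1$ the map $f=t_1$ is affine.

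The main point to verify is the bookkeeping in the induction: that the inductive hypothesis applies to the truncated network $g$, whose output dimension is a hidden width $\le n$, and that the first hidden layer plays the same role, with $t_1$ affine on all of $\mathbb Z_p^n$. The subtle step is the one where the inequality $d_{L-1}\le n$ is essential, namely producing the kernel vector $h$ of exact norm $\frac1p$. This is precisely where the width hypothesis $w\le n$ enters, and where a width of $n+1$ would break the argument.
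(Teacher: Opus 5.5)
Your proof is correct and is essentially the paper's argument. The paper takes the smallest $k$ for which $f_k=t_k\circ\Sigma\circ\cdots\circ\Sigma\circ t_1$ fails to map $\mathbb Z_p^n$ into $\mathbb Z_p^n$, which is your induction on layers unrolled, and then uses exactly your ingredients: convexity of the proper set of $x\in\mathbb Z_p^n$ where the $i$-th coordinate of $f_k(x)$ lies in $\mathbb Z_p$, Proposition \ref{pidgeon} to get a ball of radius $\frac1p$ avoiding that set, and a nonzero $h$ with $l(h)\in L_i$ (your common kernel of the other at most $n-1$ rows), rescaled to norm $\frac1p$.
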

\begin{proof} Assume that $f$ is of the form
$$f=t_L\circ\Sigma\circ\cdots\circ t_2 \circ\Sigma\circ t_1
:\mathbb Q_p^{d_x}\to\mathbb Q_p^{d_y},$$
where 
$$\Sigma(x_1,\ldots,x_n) =(\mathrm{pReLU}(x_1),\ldots,\mathrm{pReLU}(x_n)).$$
By extending the affine maps $t_1,t_2,\ldots,t_{L-1}$ by zero, if necessary, we may assume without the loss of generality that all hidden layer dimensions are $n$. Set
$$f_k=t_k\circ\Sigma\circ\cdots\circ t_2 \circ\Sigma\circ t_1$$
for $k=1,2,\ldots,L-1$. First assume that for each such $k$ we have $f_k(\mathbb Z_p^n)\subset\mathbb Z_p^n$. Since $\Sigma|_{\mathbb Z_p^n}$ is the indentity, we get that
$$f|_{\mathbb Z_p^n}=t_L\circ\cdots\circ t_2\circ t_1|_{\mathbb Z_p^n}$$
is a composition of affine maps, so it is affine. Otherwise there is a smallest $k$ such that $f_k^{-1}(\mathbb Z_p^n)\cap\mathbb Z_p^n\subsetneq\mathbb Z_p^n$. For every $i=1,2,\ldots,n$ let $S_i$ denote the set
$$S_i=\{(x_1,\ldots,x_n)\in\mathbb Q_p^n\mid x_i\in\mathbb Z_p\};$$
this set is a $\mathbb Z_p$-submodule, so it is convex. Clearly $\mathbb Z_p^n=S_1\cap\cdots\cap S_n$, and hence
$$f_k^{-1}(\mathbb Z_p^n)=f_k^{-1}(S_1)\cap\cdots\cap f_k^{-1}(S_n).$$
Therefore there is an $i$ such that $f_k^{-1}(S_i)\cap\mathbb Z_p^n\subsetneq\mathbb Z_p^n$. Since $k$ is minimal, we get that
$$f_k|_{\mathbb Z_p^n}=
t_k\circ\cdots\circ t_2\circ t_1|_{\mathbb Z_p^n}.$$
Since $t_k\circ\cdots\circ t_2\circ t_1$ is a composition of affine maps, it is affine, so $(t_k\circ\cdots\circ t_2\circ t_1)^{-1}(S_i)$ is convex by 
part $(i)$ of Lemma \ref{convex}, and hence
$$f_k^{-1}(S_i)\cap\mathbb Z_p^n=(t_k\circ\cdots\circ t_2\circ t_1)^{-1}(S_i)\cap\mathbb Z_p^n$$
is an intersection of convex subsets, so it is convex by part $(ii)$ of Lemma \ref{convex}. Because it is a proper subset of $\mathbb Z_p^n$ we get that there is a ball $B$ of radius $\frac{1}{p}$ in $\mathbb Z_p^n$ disjoint from it by Proposition \ref{pidgeon}.

We will show that $f$ is constant in some direction on $B$. Since $f$ is the composition of $\Sigma\circ f_k$ with some function, it will be sufficient to show the same for $\Sigma\circ f_k$. Let $l:\mathbb Q_p^n
\to\mathbb Q_p^n$ be a linear map and let $v\in\mathbb Q_p^n$ be a vector such that $f_k(x)=l(x)+v$ for every $x\in\mathbb Z_p^n$. Let $L_i\subset\mathbb Q_p^n$ be the one-dimensional linear subspace
$$L_i=\{(x_1,\ldots,x_n)\in\mathbb Q_p^n\mid x_j=0\ (\forall j\neq i)\}.$$
Since $l$ maps a same finite dimensional vector space to itself, the pre-image of $L_i$ is non-empty with respect to $l$. Let $h$ be a non-zero vector in this subspace: by rescaling, if it is necessary, we may assume that $\|h\|=\frac{1}{p}$. By the above $f_k(B)$ is disjoint from $S_i$, so if $x,y\in B$ are such that $f_k(x)$ and $f_k(y)$ have the same $j$-th coordinate for every $j\neq i$, or equivalently $f_k(x)-f_k(y)\in L_i$, then
$\Sigma(f_k(x))=\Sigma(f_k(y))$. In particular
$\Sigma(f_k(x+h))=\Sigma(f_k(x)+l(h))=\Sigma(f_k(x))$ for every $x\in B$. 
\end{proof}
\begin{defn} We say that  $\Sigma$-networks of width $w$ have the universal approximation property for continuous functions $f:\mathbb Z_p^{d_x}\to\mathbb Q_p^{d_y}$ in the $L_q$ norm, where $q\in[1,\infty]$, if for every such $f$ and $\epsilon> 0$ there exists a $\Sigma$-network $g$ of width $w$ such that $\|f-g\|_q<\epsilon$.
\end{defn}
\begin{thm} For every $q\in [1,\infty]$ the $\mathrm{pReLU}$-networks of width $w$ have the universal approximation property for continuous functions $f:\mathbb Z_p^{d_x} \to\mathbb Q_p^{d_y}$ in the $L_q$ norm only if $w\geq\max(d_x+1,d_y)$.
\end{thm}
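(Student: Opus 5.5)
We prove the two necessary conditions $w\geq d_y$ and $w\geq d_x+1$ separately. In both cases we exhibit a single continuous target $f$ that stays a fixed positive $L_1$-distance away from every $\mathrm{pReLU}$-network of the forbidden width. Since $\|\cdot\|_1\leq\|\cdot\|_q$ for every $q\in[1,\infty]$, a lower bound in the $L_1$ norm implies the same bound in every $L_q$ norm. So it suffices to work with $q=1$ throughout.

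\emph{The bound $w\geq d_y$.} Suppose $w<d_y$ and let $g=t_L\circ\Sigma_{L-1}\circ\cdots\circ t_1$ have width $w$. The image of $g$ lies in $t_L(\mathbb Q_p^{d_{L-1}})$, an affine subspace $A$ of $\mathbb Q_p^{d_y}$ of dimension at most $w<d_y$. We take $f$ to be the inclusion of $\mathbb Z_p^{d_x}$ into a coordinate: for instance $f(x)=(x_1,0,\ldots,0)$ composed with a homeomorphism $\mathbb Z_p^{d_x}\to\mathbb Z_p^{d_y}$ from Theorem \ref{brouwer}. Then $f$ is continuous and surjective onto $\mathbb Z_p^{d_y}$.

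Any proper affine subspace $A$ is contained in the zero set of a nonzero affine functional $\lambda$, which we normalise so that its linear part has coefficients in $\mathbb Z_p$ with at least one coefficient a unit. The set $\{y\in\mathbb Z_p^{d_y}: \lambda(y)\in p\mathbb Z_p\}$ is then a proper convex subset of $\mathbb Z_p^{d_y}$ (or it is empty). By Proposition \ref{pidgeon} there is a ball of radius $\frac1p$ in $\mathbb Z_p^{d_y}$ on which $|\lambda|=1$. Since $\lambda$ vanishes on $A$ and its linear part has $\mathbb Z_p$ coefficients, every point of that ball has distance at least $1$ from $A$. The preimage of this ball under $f$ has Haar measure $p^{-d_y}$, because $f$ is measure preserving, or at least has a positive-measure preimage by openness. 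This bounds $\|f-g\|_1$ from below by a positive constant independent of $g$. If the measure-preservation point is awkward, a simpler choice is to take $d_x\geq1$ and let $f$ be any continuous surjection whose fibres over the $p^{d_y}$ residue balls are open cosets. Then each such preimage contains a ball of fixed positive measure.

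\emph{The bound $w\geq d_x+1$.} Suppose $w\leq d_x$. Padding hidden layers as in the proof of Theorem \ref{key}, we may take width exactly $n=d_x$; this needs $d_x\geq1$. Let $f_0:\mathbb Z_p^{n}\to\mathbb Z_p$ be a homeomorphism from Theorem \ref{brouwer}, and let $f=(f_0,0,\ldots,0)$. For any network $g$ of width $n$, each output coordinate $g_1$ is itself a $\mathrm{pReLU}$-network of input dimension and width $n$. Theorem \ref{key} then gives two cases.

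In the first case $g_1$ is constant in some direction on a ball of radius $\frac1p$. Then Proposition \ref{uniform2} gives $\|f_0-g_1\|_1\geq\delta$, and hence $\|f-g\|_1\geq\delta$.

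In the second case $g_1|_{\mathbb Z_p^n}$ is affine. We handle this case in two sub-cases.
\begin{itemize}
\item If $n\geq2$, an affine map has a nontrivial kernel direction. Rescaling a kernel vector to norm $\frac1p$ shows that $g_1$ is constant in some direction on every ball, and Proposition \ref{uniform2} applies again.
\item If $n=1$, an affine map $a x+b$ is either constant, which is covered as before, or injective. Here we instead choose $f_0$ to be not close to any affine map. For example, take $f_0$ to be the locally constant function equal to $0$ on $p\mathbb Z_p$ and $1$ elsewhere, replacing injectivity. Then a direct estimate on two residue balls shows that no affine function is $L_1$-close to $f_0$. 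We must also check that Theorem \ref{key}'s first alternative still forces distance from this $f_0$; to avoid that issue, for $n=1$ we use the homeomorphism $f_0$ and compare it directly with affine maps. An affine $ax+b$ satisfies $|(a(x+h)+b)-(ax+b)|=|a||h|$, which is constant. We pick $f_0$ so that $|f_0(x+h)-f_0(x)|$ varies enough to make this impossible, for example a homeomorphism swapping residue classes nonlinearly.
\end{itemize}

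I expect this affine case to be the main obstacle: ensuring the chosen target is uniformly far from all affine maps as well as from all directionally constant functions. The cleanest fix is to choose $f_0$ so that $f_0$ composed with reduction is not affine modulo $p$ on the residue balls.
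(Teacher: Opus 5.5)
There is a genuine gap in the case $d_x=1$ when Theorem \ref{key} returns the affine alternative. Your arguments for $w\geq d_y$ and for $w\leq d_x$ with $d_x\geq 2$ are sound and essentially the paper's. In the $d_y$ part, where you use a normalised affine functional instead of intersecting the image with $\mathbb Z_p^{d_y}$, three small repairs are needed:
\begin{itemize}
\item drop the unjustified claim that a Brouwer homeomorphism is measure preserving;
\item note that you only get $|\lambda|\geq 1$ on the ball, not $|\lambda|=1$;
\item get uniformity in $g$ by observing that the ball is one of only $p^{d_y}$ cosets of $p\mathbb Z_p^{d_y}$, each with a nonempty open, hence positive-measure, preimage under the surjection $f$.
\end{itemize}

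In the case $d_x=1$ you need a single target $f_0$ that is simultaneously (a) uniformly $L_1$-far from all continuous functions constant in some direction on a ball of radius $\frac1p$, and (b) uniformly $L_1$-far from the whole non-compact family $\{ax+b : a,b\in\mathbb Q_p\}$. You establish neither for any concrete choice. Your first candidate, the indicator of $\mathbb Z_p\setminus p\mathbb Z_p$, fails (a) outright. It is constant on cosets of $p\mathbb Z_p$, so it is itself constant in every direction of norm $\frac1p$ on every ball of radius $\frac1p$, and lies at distance $0$ from the first alternative. Your second suggestion is only a heuristic. Noticing that $|f_0(x+h)-f_0(x)|$ is not constant shows that $f_0$ is not \emph{exactly} affine. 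It does not show $\inf_{a,b}\|f_0-(ax+b)\|_1>0$. Also, a homeomorphism permuting the residue classes modulo $p$ non-affinely does not exist for $p=2,3$, since every permutation of $\mathbb F_2$ or $\mathbb F_3$ is affine.

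The missing ingredient is the compactness argument the paper uses here. The map $(a,b,c)\mapsto\|ax^2+bx+c\|_1$ is a norm on $\mathbb Q_p^3$: a continuous function with vanishing $L_1$ norm is identically zero, and a nonzero quadratic has at most two roots. By equivalence of norms there is $c_0>0$ with $\|ax^2+bx+c\|_1\geq c_0\max(|a|,|b|,|c|)$. Pair this with a target that is both injective and non-affine, for instance $f_0(x)=x+px^2$.
\begin{itemize}
\item It is injective because $f_0(x)-f_0(y)=(x-y)(1+p(x+y))$ and $1+p(x+y)$ is a unit, so Proposition \ref{uniform2} gives (a).
\item $\|f_0-(ax+b)\|_1=\|px^2+(1-a)x-b\|_1\geq c_0/p$ gives (b).
\end{itemize}

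Injectivity genuinely matters, so do not simply copy the paper's target. The paper's $f$, namely $x\mapsto x^2$ composed with the homeomorphism, is not injective since $1^2=(-1)^2$. Proposition \ref{uniform2} therefore does not apply to it as stated. An injective non-affine target avoids this.
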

\begin{proof} We need to show that if $w<\max(d_x+1,d_y)$ then there is a continuous function $f:\mathbb Z_p^{d_x} \to\mathbb Q_p^{d_y}$ which cannot be approximated arbitrarily well with $\mathrm{pReLU}$-networks of width $w$ in the $L_q$ norm for every $q\in [1,\infty]$. Since the $L_1$-norm is the weakest, it will be enough to show the claim for $q=1$ only. Let's suppose that the claim is false, and derive a contradiction. 

First assume that $w<d_y$. Note that the set $A$ of cosets of $p^{d_y}\mathbb Z_p$ in $\mathbb Z_p$  and the set
$$B=\{(x_1,\ldots,x_n)\in\mathbb Z_p^{d_y}\mid x_i\in\{0,1,\ldots,p-1\}\ (\forall i=1,\ldots,n)\}$$ have the same cardinality $p^{d_y}$, so there is a bijection $b:A\to B$. Let $h:\mathbb Z_p\to\mathbb Z_p^{d_y}$ be the function given by the rule $x\mapsto
b(x\ \mathrm{mod}\ p\mathbb Z_p^{d_y})$. This function is continuous since it is constant on compact open subsets. Let $f:\mathbb Z_p^{d_x}\to\mathbb Z_p^{d_y}$ be the composition of the projection $\mathbb Z_p^{d_x}\to
\mathbb Z_p$ onto the first coordinate and $h$. Since $f$ is the composition of two continuous functions, it is also continuous. Its level sets are translates of each other, so they have the same Haar measure. Since they give a partition of $\mathbb Z_p^{d_x}$ into $p^{d_y}$ subsets, their measure is $\frac{1}{p^{d_y}}$. 

Now let $g:\mathbb Q_p^{d_x}\to\mathbb Q_p^{d_y}$ be a 
$\mathrm{pReLU}$-network of width $w$. Since $w<d_y$ the image of $g$ lies in a proper affine subspace of $\mathbb Q_p^{d_y}$. The intersection $C$ of the latter with $\mathbb Z_p^{d_y}$ is convex by part $(ii)$ of Lemma \ref{convex}, since it is the intersection of convex subsets, and it is a proper subset of $\mathbb Z_p^{d_y}$, since the latter contains the zero vector and spans
$\mathbb Q_p^{d_y}$. Therefore by Proposition \ref{pidgeon} there is a ball of radius $\frac{1}{p}$ in $\mathbb Z_p^{d_y}$ disjoint from $C$. Because $B$ is a full set of representatives of the cosets of $p\mathbb Z_p^{d_y}$ in $\mathbb Z_p^{d_y}$, we get that there is a $v\in B$ such that $v+p\mathbb Z_p^{d_y}$ is disjoint from $C$. Therefore
\begin{align*}
\|f(x)-g(x)\|_1&=\int_{\mathbb Z_p^n}\|f(x)-g(x)\|d\mu(x) \\
& \geq\int_{f^{-1}(v)}\|v-g(x)\|d\mu(x) \\
& \geq\int_{f^{-1}(v)}\frac{1}{p}d\mu(x)=
\frac{1}{p}\cdot\mu(f^{-1}(v))=\frac{1}{p^{d_y+1}},
\end{align*}
which is a contradiction. 

Otherwise $w\leq d_x$. By Theorem \ref{brouwer} there is a homeomorphism $h:\mathbb Z_p^{d_x}\to\mathbb Z_p$. Let $f:\mathbb Z_p^{d_x}\to\mathbb Z_p^{d_y}$ be the composition of $h$, the map
$\mathbb Z_p\to\mathbb Z_p$ given by the rule $x\mapsto x^2$, and the embedding of $\mathbb Z_p$ into $\mathbb Z_p^{d_y}$ via the first coordinate. For every $\Sigma$-neural network $g$ of the form
$$g=t_L\circ\Sigma_{L-1}\circ\cdots\circ t_2 \circ\Sigma_1 \circ t_1
:\mathbb Q_p^{d_x}\to\mathbb Q_p^{d_y},$$
let $g^{*}$ denote the $\Sigma$-neural network we get by replacing $t_L$ with the composition $\pi_1\circ t_L$, where $\pi_1:\mathbb Q_p^{d_y}
\to\mathbb Q_p$ is the projection onto the first coordinate. Since $\pi_1$ is a contraction, we get that $\|f-g^*\|_1\leq\|f-g\|_1$. 

Therefore we may assume without the loss of generality that $d_y=1$. Since $f$ above is injective, it cannot be approximated arbitrarily well by 
continuous functions defined on $\mathbb Z_p^{d_x}$ which are constant in some direction on a ball of radius $\frac{1}{p}$ in
$\mathbb Z_p^{d_x}$ by Proposition \ref{uniform2}. So $f$ can be approximated arbitrarily well by affine functions defined on $\mathbb Z_p^{d_x}$ by Theorem \ref{key}. These are constant in some direction on a ball of radius $\frac{1}{p}$ when $d_x\geq2$, which is a contradiction.

So we may assume without the loss of generality that $d_x=1$, too. In this case chose $h$ to be the identity map so that $f:\mathbb Z_p\to\mathbb Z_p$ is given by the rule $x\mapsto x^2$. Note that a non-zero polynomial cannot be identically zero on $\mathbb Z_p$, therefore the map
$$\mathbb Q_p^3\to\mathbb R,\quad (a,b,c)\mapsto\|ax^2+bx+c\|_1$$
is a norm on $\mathbb Q_p^3$. Since the induced topology on $\mathbb Q_p^3$ is locally compact, this norm takes its minimum on the closed subset $a=1$. Since the latter does not contain zero, the minimum is positive, which is a contradiction. 
\end{proof}

\section{Upper bound} 

In this section we will give the upper bound for the $p$-adic neural network approximation problem.
\begin{defn} Let $X,Y$ be a topological spaces. A function $f:X\to Y$ is locally constant, if every $x\in X$ has an open neighbourhood $U_x\subseteq X$ such that $f|_{U_x}$ is constant. Clearly every locally constant function is continuous. When $X$ is connected then every locally constant function is actually constant. This is the case for example when $X$ is a convex in a real vector space, so in the real case we don't have many such functions usually. However when $X$ is totally disconnected there are an abundance of locally constant functions on $X$. This is the case when $X$ is a compact subset of a finite dimensional $\mathbb Q_p$-linear vector space.  
\end{defn}
The following claim is very well-known, We only include it for the reader's convenience. 
\begin{lemma}\label{locally_constant} Let $V\cong\mathbb Q^d_p$ be a finite dimensional $\mathbb Q_p$-linear vector space and let $C\subset V$ be compact. The following holds:
\begin{enumerate}
\item[$(i)$] the space of locally constant functions $f:C\to\mathbb Q^n_p$ is dense in the space of continuous functions with respect to the supremum norm, 
\item[$(ii)$] for every  locally constant function $f:C\to\mathbb Q^n_p$ there is a positive integer $m$ such that $f$ is constant on the intersection of $C$ and the cosets of $p^m\mathbb Z_p^d$. 
\end{enumerate}
\end{lemma}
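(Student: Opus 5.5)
I would prove $(ii)$ first, since it only uses compactness, and then deduce $(i)$ from it together with uniform continuity.

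For $(ii)$, let $f:C\to\mathbb Q_p^n$ be locally constant. Each $x\in C$ has an open neighbourhood $U_x$ on which $f$ is constant. Since the sets $x+p^k\mathbb Z_p^d$ form a neighbourhood basis at $x$, after identifying $V$ with $\mathbb Q_p^d$ we may shrink $U_x$ to $(x+p^{k_x}\mathbb Z_p^d)\cap C$ for some positive integer $k_x$. The open sets $x+p^{k_x}\mathbb Z_p^d$ cover $C$, so by compactness finitely many of them, say for $x_1,\ldots,x_r$, already cover $C$. Put $m=\max_j k_{x_j}$. For any coset $y+p^m\mathbb Z_p^d$ meeting $C$, choose a point $z$ of $C$ in it; then $z$ lies in some ball $x_j+p^{k_{x_j}}\mathbb Z_p^d$.

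The key step is the ultrametric nesting property: two balls are either disjoint or one contains the other. Hence $z+p^m\mathbb Z_p^d\subseteq x_j+p^{k_{x_j}}\mathbb Z_p^d$, because $p^m\mathbb Z_p^d\subseteq p^{k_{x_j}}\mathbb Z_p^d$ and $z-x_j\in p^{k_{x_j}}\mathbb Z_p^d$. So $f$ is constant on $(y+p^m\mathbb Z_p^d)\cap C$, which proves $(ii)$.

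For $(i)$, let $f:C\to\mathbb Q_p^n$ be continuous and $\epsilon>0$. Since $C$ is compact, $f$ is uniformly continuous, so there is an $m$ with $\|f(x)-f(y)\|<\epsilon$ whenever $x,y\in C$ and $x-y\in p^m\mathbb Z_p^d$. The cosets of $p^m\mathbb Z_p^d$ are open and pairwise disjoint, and by compactness only finitely many of them meet $C$. In each such coset pick a representative $c\in C$, and define $g$ on that coset intersected with $C$ to be the constant $f(c)$.

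Then $g$ is locally constant, because the cosets are open. It also satisfies $\sup_{x\in C}\|f(x)-g(x)\|\leq\epsilon$, since each $x$ lies in the same coset as its chosen representative $c$, so $x-c\in p^m\mathbb Z_p^d$. This proves $(i)$.

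The only step that is not purely formal is getting a single uniform $m$, which is the same compactness argument in both parts. It works because cosets of $p^m\mathbb Z_p^d$ are simultaneously open, closed and nested, so there is no real obstacle.
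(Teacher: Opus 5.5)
Your proof is correct and follows the paper's argument essentially step for step. For $(ii)$, both use a finite subcover by balls on which $f$ is constant and pass to the smallest radius (your largest exponent $m$); for $(i)$, both use uniform continuity plus a representative in each coset of a small compact open subgroup. Your proof of $(i)$ is in fact direct and does not use $(ii)$, despite your opening remark. You also correctly take the representatives in $C$, which the paper's write-up glosses over.
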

\begin{proof} Let $f:C\to\mathbb Q^n_p$ be a continuous function and fix a positive $\epsilon$. Since $C$ is compact, the function $f$ is uniformly continuous, there is a positive $\delta$ such that for every $x,y\in C$ such that $\|x-y\|\leq\delta$ we have $\|f(x)-f(y)\|<\epsilon$. Note that $C$ can be covered by the cosets of the compact, open subgroup $B(0,\delta)\subset V$.

For every such coset $B$ such that $B\cap C\neq\emptyset$ choose an $x_B\in B$. Let $g:C\to\mathbb Q^n_p$ be the unique function such that for every $B$ as above $g|_B$ is the constant $f(x_B)$. Clearly $g$ is locally constant. Now let $y\in C$ be arbitrary; let $B$ be the unique coset of $B(0,\delta)$ such that $y\in B$. Then $\|f(y)-g(y)\|=\|f(y)-f(x_B)\|<\epsilon$, since $\|y-x_B\|\leq\delta$. Therefore claim $(i)$ holds. 

Let $f:C\to\mathbb Q^n_p$ be a locally constant function. For every $x\in C$ choose an open ball $B(x,\rho_x)$ such that $f|_{B(x,\rho_x)\cap C}$ is constant. Since $C$ is compact, there is a finite subset $x_1,x_2,\ldots,x_n\in C$ such that $B(x_1,\rho_{x_1}),\ldots,B(x_n,\rho_{x_n})$ already cover $C$. Set $\rho$ as the minimum of the radii
$\rho_1,\ldots,\rho_n$. Then each $B(x_i,\rho_{x_i})$ is the union of cosets   of the $\mathbb Z_p$-module $B(0,\rho)$, and hence we get that $C$ can  be covered by some cosets of $B(0,\rho)$ such that the restriction of $f$ onto the intersection of $C$ and any of these cosets is constant. Claim $(ii)$ now follows from the fact that $B(0,\rho)=p^m\mathbb Z_p^d$ for some positive integer $m$. 
\end{proof}
Our aim is to give a proof of the following
\begin{thm}\label{upper} If $w\geq\max(d_x+1,d_y)$ then $\mathrm{pReLU}$-networks of width $w$ have the universal approximation property for continuous functions $f:\mathbb Z_p^{d_x} \to\mathbb Q_p^{d_y}$ in the $L_{\infty}$-norm.
\end{thm}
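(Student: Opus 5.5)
The plan is to reduce to locally constant target functions, and then to build an exact network for those by composing an encoder, a one-dimensional interpolator and a decoder, as outlined in the introduction.

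\emph{Reduction.} Let $f:\mathbb Z_p^{d_x}\to\mathbb Q_p^{d_y}$ be continuous and fix $\epsilon>0$. By part $(i)$ of Lemma \ref{locally_constant} I may replace $f$ by a locally constant function. By part $(ii)$ of the same lemma, this function is constant on the cosets of $p^m\mathbb Z_p^{d_x}$ for some $m$, so it takes only finitely many values. Composing with a final affine rescaling $y\mapsto p^{-N}y$, which is absorbed into $t_L$, I may assume all values lie in $\mathbb Z_p^{d_y}$. Choosing $m$ larger, it then suffices to produce a network whose output agrees with $f$ modulo $p^m$ on each coset. Since width can always be padded, I will build three pieces and compose them:
\begin{enumerate}
\item[$(a)$] an encoder $E:\mathbb Z_p^{d_x}\to\mathbb Z_p$ of width $d_x+1$ that is constant on cosets of $p^m\mathbb Z_p^{d_x}$ and separates distinct cosets;
\item[$(b)$] a width-$2$ network $I:\mathbb Z_p\to\mathbb Z_p$ interpolating any prescribed values on a finite set of points;
\item[$(c)$] a decoder $D:\mathbb Z_p\to\mathbb Z_p^{d_y}$ of width $d_y$ whose image meets every coset of $p^m\mathbb Z_p^{d_y}$.
\end{enumerate}
Then $D\circ I\circ E$ does the job. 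The map $I$ sends the code of a coset to a preimage under $D$ of a point congruent to the required value, and the total width is $\max(d_x+1,2,d_y)\le w$ provided $d_x\ge1$.

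\emph{Basic gadget.} The key tool is the observation that for $a\in\mathbb Q_p$ and $k\in\mathbb Z$,
$$x\mapsto p^k\,\mathrm{pReLU}\bigl(p^{-k}(x-a)\bigr)$$
equals $x-a$ on $a+p^k\mathbb Z_p$ and $0$ elsewhere. Consequently
$$x\mapsto x-a-p^k\,\mathrm{pReLU}\bigl(p^{-k}(x-a)\bigr)+a$$
is the identity off the ball $a+p^k\mathbb Z_p$ and collapses that ball to the constant $a$. This map costs one neuron, because the remaining linear terms can be carried in the next affine map only if $x$ itself is preserved. That is exactly why an extra channel, and hence width $d_x+1$, is needed.

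\emph{Encoder and interpolator.} For $E$ I keep the $d_x$ coordinates and one accumulator $y$. I process the coordinates one at a time, digit by digit: for each coordinate $x_i$ and each ball $r+p^{k}\mathbb Z_p$ with $k\le m$, one layer uses the gadget on $x_i$. This rounds $x_i$ to its truncation modulo $p^m$, while the affine maps add $p^{s}\cdot(\text{digit})$ into $y$ at a position $s$ reserved for that coordinate. In the end $y=\sum_i p^{m(i-1)}\bar x_i$, where $\bar x_i$ is $x_i$ truncated modulo $p^m$, which is injective on cosets. For $I$ I use two channels: one carries the code $c$, and the other accumulates $\sum_j \lambda_j\,\mathbf 1_{c\in c_j+p^K\mathbb Z_p}$. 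Each indicator-weighted term is obtained from the collapse gadget, which makes $c$ constant on the ball $c_j+p^K\mathbb Z_p$, followed by an affine correction. Here $K$ is chosen so that the finitely many codes lie in distinct balls.

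\emph{Decoder and main obstacle.} For $D$ I would go in reverse, peeling base-$p^m$ digits off a single $\mathbb Z_p$-input into the $d_y$ output channels using the same gadget. I expect this to be the main obstacle. When $d_y\le d_x+1$ there is no spare channel, so the input must be consumed while it is being decoded, and the last output coordinate must serve as the remaining input. I would first try to make the last channel be the running remainder $p^{-m}(z-\bar z)$ after extracting $\bar z$ into another channel, checking that each extraction step needs only the channels already present. A secondary issue is verifying that every gadget step uses exactly one nonlinear neuron per layer while the other neurons pass their inputs through. This holds because pReLU is the identity on $\mathbb Z_p$, provided all carried quantities are kept in $\mathbb Z_p$ by rescaling.
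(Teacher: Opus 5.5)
Your reduction to locally constant $\mathbb Z_p^{d_y}$-valued targets and your encoder/interpolator/decoder architecture are the paper's. However, you do not construct the decoder, and that is the crux.

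The tight case is $d_y\ge d_x+1$, where $w$ may equal $d_y$. When $d_y\le d_x$ the decoder has a spare channel, so your remark about $d_y\le d_x+1$ has the inequality the wrong way round. Peeling off a base-$p^m$ digit with your collapse gadget needs, at the same time:
\begin{enumerate}
\item[$(1)$] the digits already extracted;
\item[$(2)$] the current remainder $z$, which must stay alive by your own accounting of the gadget;
\item[$(3)$] the gadget neuron $\mathrm{pReLU}(p^{-m}(z-r))$;
\item[$(4)$] a channel accumulating the new remainder $(z-\bar z)/p^m$ over the $p^m$ residues $r$.
\end{enumerate}
At the last extraction step this is $d_y+1$ neurons, and you give no way to fold the accumulator away.

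The paper avoids exact digit extraction altogether, because a decoder only needs its image to meet every coset of $p^m\mathbb Z_p^{d_y}$. It composes the width-$2$ maps $t_\beta(x,y)=(x,\mathrm{pReLU}(\frac{x-\beta}{p^m}+\frac{y}{p^m}))$ over representatives $\beta$ of $\mathbb Z_p/p^m\mathbb Z_p$. Each step overwrites $y$ lossily. Lemmas \ref{affine} and \ref{lemma9a} show that on every residue class a smaller ball is still mapped affinely onto $\mathbb Z_p$, so the resulting $g$ is a juggling function (Lemma \ref{lemma9b}). Then $x\mapsto(x,g(x),g(g(x)),\ldots)$ is a decoder (Lemma \ref{lemma9c}). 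It has width $d_y$, since each step appends one coordinate with a width-$2$ block while the earlier coordinates are frozen (Lemma \ref{lemma10}).

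The width claims for $E$ and $I$ also fail as written, again by your own count of the gadget. Keeping all $d_x$ coordinates plus a separate accumulator $y$ while a gadget neuron acts on $x_i$ uses $d_x+2$ neurons. This is easy to repair: round each coordinate in place (Lemmas \ref{lemma2} and \ref{lemma3}) and form $\sum_i p^{m(i-1)}\bar x_i$ only in the final affine map, as in Lemma \ref{lemma9}.

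Your interpolator is a real problem. It keeps the code $c$, the accumulator of $\sum_j\lambda_j\mathbf 1_{c\in c_j+p^K\mathbb Z_p}$, and a gadget neuron alive at once, so it has width $3$. That is too much when $d_x=1$ and $d_y\le 2$, where $w$ may be $2$. The paper's width-$2$ interpolator uses no accumulator at all. It composes maps $x\mapsto x+\mathrm{pReLU}(\lambda(x-\gamma))$ that send one point $\alpha$ to a prescribed $\beta$ while fixing a given finite set (Lemma \ref{lemma4}). It chains these point-moving maps (Lemma \ref{lemma5}). To avoid collisions between inputs and targets, it routes through an auxiliary finite set $R$ disjoint from both (Lemma \ref{lemma6}).
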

The first main step to prove the result above is the following
\begin{thm}\label{upper2b} Let $w\geq d_x+1$ and let $f:\mathbb Z_p^{d_x}\to\mathbb Q_p$ be a locally constant function. Then there is a $\mathrm{pReLU}$-network of width $w$ which computes $f$.
\end{thm}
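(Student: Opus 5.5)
The plan follows the strategy described in the introduction: first an \emph{encoding} layer block, then one-dimensional \emph{interpolation}. By part $(ii)$ of Lemma \ref{locally_constant} there is an $m$ such that $f$ is constant on the cosets of $p^m\mathbb Z_p^{d_x}$. Hence $f=F\circ E$, where $E$ is any map that is constant on these cosets and separates them, and $F$ is a function on the finite set $E(\mathbb Z_p^{d_x})$. It suffices to build both $E$ and the interpolant of $F$ as $\mathrm{pReLU}$-networks, the first of width $d_x+1$ and the second of width $2\le d_x+1$, and to compose them. Widths can always be padded with zero neurons, so width exactly $d_x+1$ is no restriction.

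The basic gadget is the identity, for $a\in\mathbb Z_p$ and $x\in\mathbb Z_p$,
$$p^m\,\mathrm{pReLU}\Bigl(\frac{x-a}{p^m}\Bigr)=\begin{cases} x-a, & x\in a+p^m\mathbb Z_p,\\ 0,& \text{otherwise}.\end{cases}$$
Fix a set $R=\{0,1,\ldots,p^m-1\}$ of representatives of $\mathbb Z_p/p^m\mathbb Z_p$. Subtracting the above term from $x$ for all $a\in R$ gives $x\mapsto r(x)\in R$, the representative of $x\bmod p^m$. The encoding is
$$E(x_1,\ldots,x_{d_x})=\sum_{i=1}^{d_x}p^{(i-1)m}r(x_i),$$
which is constant on cosets of $p^m\mathbb Z_p^{d_x}$ and injective on them, since the digits do not overlap. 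The network keeps the $d_x$ coordinates in $d_x$ channels, which pReLU leaves unchanged because they lie in $\mathbb Z_p$. It uses the extra $(d_x+1)$-st channel as an accumulator, processing one coordinate $x_i$ and one residue $a$ per layer (or per pair of layers). Once coordinate $x_i$ is done, its channel is freed and can hold the accumulated value.

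The main obstacle is the bookkeeping of a single update step. To add the gadget term for $x_i$ to the accumulator, one must apply $\mathrm{pReLU}$ to $(x_i-a)/p^m$ while keeping both $x_i$ and the accumulator $y$, and with only $d_x+1$ channels there is no spare neuron. I would first try to fold the accumulator into a channel that passes through $\mathrm{pReLU}$ harmlessly. That means keeping all stored quantities inside $\mathbb Z_p$ so that $\mathrm{pReLU}$ acts as the identity on them, and storing $x_i$ together with $y$ as a combination $y+\lambda x_i$ with $\lambda$ a high power of $p$. Then $x_i \bmod p^m$ and $y$ can still be read off by affine maps and further gadgets, because the values involved are already reduced to representatives with bounded digits. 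If this fails, the fallback is to process residues so that the gadget itself overwrites $x_i$ by $x_i-p^m\mathrm{pReLU}((x_i-a)/p^m)$. This is legitimate because, on the coset where the term is nonzero, $x_i$ is recovered as $a+p^m\mathrm{pReLU}(\cdot)$, and off it the term is zero. The update then needs the argument to be shifted so that the pReLU output encodes both cases.

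The second stage interpolates a function $F$ on a finite set $T=\{t_1,\ldots,t_N\}\subset\mathbb Z_p$ with width $2$. Choose $M$ with the $t_j$ pairwise incongruent mod $p^M$. Keep $t$ in one channel and an accumulator in the other. For each $j$, use the gadget $p^M\mathrm{pReLU}((t-t_j)/p^M)$, which equals $0$ on $T$ except at $t_j$, to isolate $t_j$ (equivalently, to add $F(t_j)$ times an indicator of $t_j$). Scaling the output by suitable powers of $p$ keeps all intermediate values in $\mathbb Z_p$, and the final affine map $t_L$ then rescales to arbitrary $\mathbb Q_p$-values. The same channel-sharing issue as above arises here and is resolved with the same trick. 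Composing the two networks then gives a $\mathrm{pReLU}$-network of width $d_x+1$ computing $f$.
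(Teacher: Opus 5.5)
Your architecture (reduce each coordinate to a fixed representative mod $p^m$, pack the digits into one element of $\mathbb Z_p$, interpolate on the resulting finite set, compose) is the paper's. Your gadget $p^m\,\mathrm{pReLU}((x-a)/p^m)$ is exactly the one behind Lemma \ref{lemma2}. However, the one point that needs an idea is fitting each step into the width budget, and that is precisely what you leave open: you call it the main obstacle and offer only tentative fixes.

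For the encoder the fix is easy, because no accumulator needs to occupy the extra channel. Overwrite $x_i$ in place: feed $x_i-a$ into the $i$-th channel (it lies in $\mathbb Z_p$, so $\mathrm{pReLU}$ passes it unchanged) and $(x_i-a)/p^m$ into the extra channel. The next affine map forms $(x_i-a)-p^m\,\mathrm{pReLU}((x_i-a)/p^m)+a$, which equals $a$ on $a+p^m\mathbb Z_p$ and $x_i$ elsewhere. Iterating over all $a$ and all $i$ uses $(d_x-1)+2=d_x+1$ channels, and the sum $\sum_i p^{(i-1)m}r(x_i)$ is absorbed into the last affine map (Lemmas \ref{lemma3} and \ref{lemma9}). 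Your own remark that a finished coordinate's channel can host the accumulator would also work, since nothing has to be accumulated before $x_1$ is done. By contrast, your fallback, in which a single $\mathrm{pReLU}$ output must ``encode both cases'', cannot work: a neuron fed $(x_i-a)/p^m$ outputs $0$ off the coset, and $x_i$ is lost there.

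The interpolation stage contains a genuine gap. First, $p^M\,\mathrm{pReLU}((t-t_j)/p^M)$ does not single out $t_j$. At $t=t_j$ its value is $p^M\cdot 0=0$, so it vanishes on all of $T$. You need a shifted argument such as $\mathrm{pReLU}((t-t_j)/p^M+1)$, which restricts to the indicator of $t_j$ on $T$. Second, and more seriously, keeping $t$ and an accumulator in two channels leaves no neuron for the gadget, so your scheme has width $3$. This suffices when $d_x\ge 2$, but for $d_x=1$ the budget is exactly $2$, and the ``same trick'' you appeal to was never found.

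The missing idea is to drop the accumulator and move the single carried value in place while preventing collisions. The paper does this in three steps:
\begin{enumerate}
\item[$(i)$] Lemma \ref{lemma4} gives a width-$2$ network sending one point $\alpha$ to any $\beta$ while fixing a finite set $S$ with $\alpha\notin S$. It uses the channels $\frac{\beta-\alpha}{\alpha-\gamma}(x-\gamma)$ and $x$, with $\gamma$ so close to $\alpha$ that the first is non-integral on $S$.
\item[$(ii)$] Lemma \ref{lemma5} composes such moves in an order that never disturbs points already placed or still to be moved.
\item[$(iii)$] Lemma \ref{lemma6} first routes $T$ through an auxiliary set $R$ disjoint from $T$ and from the targets. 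Otherwise sending $t_j$ to its target may land on an unprocessed $t_k$ and merge the two.
\end{enumerate}
Alternatively, staying closer to your gadgets (with $F$ rescaled to be $\mathbb Z_p$-valued), reserve a residue class $u+p^M\mathbb Z_p$ containing no $t_k$. At step $j$, replace the carried value $z$ by $z+(u+p^MF(t_j)-t_j)\,\mathrm{pReLU}((z-t_j)/p^M+1)$. Parked values never trigger later steps, and the last affine map returns $(z-u)/p^M$.
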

\begin{proof} We will prove these theorems through a sequence of lemmas. 
\begin{lemma}\label{lemma4} Let $S\subset\mathbb Z_p$ be a finite subset, and let $\alpha,\beta\in\mathbb Z_p$ be two elements. Assume that $\alpha\not\in S$. Then there is a $\mathrm{pReLU}$-network of width $2$ which computes a function $f:\mathbb Z_p\to\mathbb Z_p$ such that $f(\alpha)=\beta$ and  $f(x)=x$, if $x\in S$. 
\end{lemma}
\begin{proof} If $\alpha=\beta$, then the identity function of $\mathbb Z_p$, which is computed by a $\mathrm{pReLU}$-network of width $1$, will do. So we may assume without the loss of generality that $\alpha\neq\beta$, which implies $|\beta-\alpha|>0$. Let $\epsilon$ be the minimum of $|s-\alpha|$, where $s\in S$. Since $\alpha\not\in S$, we have $\epsilon>0$. Now let $\gamma\in\mathbb Z_p$ be such that $\gamma\neq\alpha$, and 
$$|\gamma-\alpha|<|\beta-\alpha|\cdot\epsilon.$$
Since $|\beta-\alpha|\leq1$, we have $|\gamma-\alpha|<\epsilon$, so
for every $s\in S$ we have $|s-\alpha|\geq\epsilon>|\alpha-\gamma|$, therefore $|s-\gamma|=|(s-\alpha)+(\alpha-\gamma)|=|s-\alpha|\geq\epsilon$ by the ultrametric inequality, and hence
$$\left|\frac{\beta-\alpha}{\alpha-\gamma}(s-\gamma)\right|\geq
\frac{|\beta-\alpha|}{|\gamma-\alpha|}\cdot\epsilon>1.$$
Let $g:\mathbb Z_p\to\mathbb Q_p^2$ be the function given by the rule $g(x)=(\frac{\beta-\alpha}{\alpha-\gamma}(x-\gamma),x)$, let $h:\mathbb Q^2_p\to\mathbb Q_p$ be the function given by the rule $h(x,y)=x+y$, and let $f:\mathbb Z_p\to\mathbb Q_p$ be the composition $h\circ\Sigma\circ g$. By definition $f$ is a $\mathrm{pReLU}$-network of width $2$. If $x=\alpha$, then the first coordinate of $g(x)$ is $\frac{\beta-\alpha}{\alpha-\gamma}(\alpha-\gamma)=\beta-\alpha\in\mathbb Z_p$, so $\Sigma(g(x))=(\beta-\alpha,\alpha)$, and hence $f(x)=\beta-\alpha+\alpha=\beta$. If $x\in S$, then by the above $\Sigma(g(x))=(0,x)$, and hence $f(x)=0+x=x$.
\end{proof}
\begin{lemma}\label{lemma5} Let $S\subset\mathbb Z_p$ be a finite subset, and let $f^*:S\to\mathbb Z_p$ be a function such that $\mathrm{Im}(f^*)\cap S=\emptyset$. Then there is a $\mathrm{pReLU}$-network of width $2$ which computes a function $f:\mathbb Z_p\to\mathbb Z_p$ such that $f(x)=f^*(x)$, if $x\in S$. 
\end{lemma}
\begin{proof} List the elements of $S$ in some order as $\alpha_1,\alpha_2,\ldots,\alpha_n$, and set $\beta_i=f^*(\alpha_i)$ for every $i=1,2,\ldots,n$. For every such $i$ let
$$S_i=\{j<i\mid\beta_j\}\cup\{j>i\mid\alpha_j\}.$$
Because $\mathrm{Im}(f^*)\cap S=\emptyset$ we get that $\alpha_i\not
\in S_i$. Therefore there is a $\mathrm{pReLU}$-network of width $2$ which computes a function $f_i:\mathbb Z_p\to\mathbb Z_p$ such that $f_i(\alpha_i)=\beta_i$ and  $f(x)=x$, if $x\in S_i$ by Lemma \ref{lemma4}. Let $g_i$ be the function
$$g_i=f_i\circ f_{i-1}\circ\cdots\circ f_1.$$
Since $g_i$ is the composition of pReLU networks of width $2$, it is a pReLU network of width $2$, too. It will be sufficient to show by induction on $i$ that 
$$g_i(\alpha_j)=\begin{cases}
    \beta_j & \textrm{, if $j=1,2,\ldots,i$},\\
     \alpha_j  & \textrm{, otherwise,} \end{cases}$$
because then $f=g_n$ will satisfy the properties in the claim. We have $g_1=f_1$, so in the case $i=1$ the claim is obvious. Assume now that the statement holds for $i-1$. Note that for every $j=1,2,\ldots,i-1$ the function $g_{i-1}$ maps $\alpha_j$ to $\beta_j$, while $f_i$ maps
$\beta_j$ to $\beta_j$, therefore $g_i=f_{\alpha_i}\circ g_{i-1}$ maps
$\alpha_j$ to $\beta_j$. Similarly for every $j=i+1,\ldots,n$ the function $g_{i-1}$ maps $\alpha_j$ to $\alpha_j$, while $f_i$ maps $\alpha_j$ to
$\alpha_j$, therefore $g_i=f_{\alpha_i}\circ g_{i-1}$ maps $\alpha_j$ to $\alpha_j$. Finally $g_{\alpha_i}$ maps $\alpha_i$ to $\alpha_i$, and $f_i$ maps $\alpha_i$ to $\beta_i$, so  $g_i=f_{\alpha_i}\circ g_{i-1}$ maps
$\alpha_i$ to $\beta_i$. 
\end{proof}
\begin{lemma}\label{lemma6} Let $S\subset\mathbb Z_p$ be a finite subset, and let $f^*:S\to\mathbb Z_p$ be a function. Then there is a $\mathrm{pReLU}$-network of width $2$ which computes a function $f:\mathbb Z_p\to\mathbb Z_p$ such that $f(x)=f^*(x)$, if $x\in S$. 
\end{lemma}
\begin{proof} Because $\mathbb Z_p$ is infinite, there is a finite subset $R\subset\mathbb Z_p$ of the same cardinality as $S$ such that $S\cap R=\emptyset$ and $R\cap\mathrm{Im}(f^*)=\emptyset$. Let $g^*:S\to R$ be a bijection and let $h^*:R\to\mathrm{Im}(f^*)$ be the composition $f^*
\circ(g^*)^{-1}$. By Lemma \ref{lemma5} there is are two $\mathrm{pReLU}$-networks $g,h$ of width $2$ such that $g(x)=g^*(x)$, if $x\in S$, and $h(x)=h^*(x)$, if $x\in R$, because $S\cap R=\emptyset$ and $R\cap\mathrm{Im}(f^*)=\emptyset$, respectively. Then the composition $f=h\circ g$  is a $\mathrm{pReLU}$-network of width $2$, since it is the composition of two $\mathrm{pReLU}$-networks of width $2$,  such that $f(x)=f^*(x)$, if $x\in S$.  
\end{proof}
\begin{lemma}\label{lemma2} Let $m$ be a positive integer and let $\alpha\in\mathbb Z_p$. Then there is a $\mathrm{pReLU}$-network of width $2$ which computes the function $f_{\alpha}:\mathbb Z_p\to\mathbb Z_p$ given by
$$f_{\alpha}(x)=\begin{cases}
    \alpha & \text{, if } x\in\alpha+p^m\mathbb Z_p,\\
    x & \text{, otherwise. } \end{cases}$$
\end{lemma}
\begin{proof}  Let $g:\mathbb Z_p\to\mathbb Q_p^2$ be the function given by the rule $g(x)=(\frac{x-\alpha}{p^m},x-\alpha)$, let $h:\mathbb Q^2_p\to\mathbb Q_p$ be the function given by the rule $h(x,y)=y-p^mx+\alpha$, and let $f:\mathbb Z_p\to\mathbb Q_p$ be the composition $h\circ\Sigma\circ g$. By definition $f$ is a $\mathrm{pReLU}$-network of width $2$. If $x\in\alpha+p^m\mathbb Z_p$, then $\frac{x-\alpha}{p^m}\in\mathbb Z_p$, so $\Sigma(g(x))=(\frac{x-\alpha}{p^m},x-\alpha)$, and hence $f(x)=(x-\alpha)-p^m\frac{x-\alpha}{p^m}+\alpha=\alpha$. If $x\not\in\alpha+p^m\mathbb Z_p$, then $\frac{x-\alpha}{p^m}\not\in\mathbb Z_p$, so $\Sigma(g(x))=(0,x-\alpha)$, and hence $f(x)=x-\alpha-0+\alpha=x$.
\end{proof}
\begin{lemma}\label{lemma3} Let $m$ be a positive integer and assume that for every coset $D$ of $p^m\mathbb Z_p$ in $\mathbb Z_p$ an element $\alpha_D\in D$ is chosen. Then there is a $\mathrm{pReLU}$-network of width $2$ which computes the function $f:\mathbb Z_p\to\mathbb Z_p$ such that $f(x)=\alpha_D$, if $x\in\alpha_D+p^m\mathbb Z_p$. 
\end{lemma}
\begin{proof} List the cosets of $p^m\mathbb Z_p$ in $\mathbb Z_p$ in some order as $D(1),D(2),\ldots,D(p^m)$, and set $\alpha_i=\alpha_{D(i)}$ for every $i=1,2,\ldots,p^m$. For every such $i$ let $g_i:\mathbb Z_p\to
\mathbb Z_p$ be the function
$$g_i=f_{\alpha_{i}}\circ f_{\alpha_{i-1}}\circ\cdots\circ f_{\alpha_{1}},$$
where we use the notation in Lemma \ref{lemma2}. Since $g_i$ is the composition of pReLU networks of width $2$, it is a pReLU network of width $2$, too. It will be sufficient to show by induction on $i$ that 
$$g_i(x)=\begin{cases}
    \alpha_j & \textrm{, if $x\in\alpha_j+p^m\mathbb Z_p$ for every $j=1,2,\ldots,i$},\\
    x & \textrm{, otherwise,} \end{cases}$$
because then $f=g_{p^m}$ will satisfy the properties in the claim. We have $g_1=f_{\alpha_1}$, so in the case $i=1$ the claim is obvious. Assume now that the statement holds for $i-1$. Note that for every $j=1,2,\ldots,p^m$ the function $g_{i-1}$ maps $D(j)$ into $D(j)$. For $j\neq i$ the function $f_{\alpha_i}$ is the identity on $D(j)$, so $g_i=
f_{\alpha_i}\circ g_{i-1}$ has the required values on the set $\bigcup_{j\neq i}D(j)$. However $f_{\alpha_i}$ maps $D(i)$ to $\alpha_i$, so  $g_i=f_{\alpha_i}\circ g_{i-1}$ takes the constant value $\alpha_i$ on $D(i)$. 
\end{proof}
\begin{defn} Let $d,m$ be positive integers. An encoding function of type $(d,m)$ is a function $f:\mathbb Z_p^d\to\mathbb Z_p$ such that for every $x,y\in\mathbb Z_p^d$ we have $f(x)=f(y)$ if and only if $x-y\in p^m\mathbb Z_p^d$.
\end{defn}
\begin{lemma}\label{lemma8} Let $m$ be a positive integer and assume that for every coset $D$ of $p^m\mathbb Z_p$ in $\mathbb Z_p$ an element $\alpha_D\in D$ is chosen. Let $f:\mathbb Z_p\to\mathbb Z_p$ be the function such that $f(x)=\alpha_D$, if $x\in\alpha_D+p^m\mathbb Z_p$. Then the function $g:\mathbb Z^d_p\to\mathbb Z_p$ given by the rule:
$$g(x_1,x_2,\ldots,x_d)=f(x_1)+p^mf(x_2)+\cdots+p^{dm}f(x_d)$$
for every $(x_1,x_2,\ldots,x_d)\in\mathbb Z_p^d$ is an encoding function of type $(d,m)$. 
\end{lemma}
\begin{proof} Let $(x_1,\dots,x_d)$ and $ (y_1,\dots,y_d)\in\mathbb Z_p^d$ be arbitrary. If $x_i\equiv y_i\mod p^m$ for every $i=1,2,\ldots,d$, then $f(x_i)=f(y_i)\mod p^m$ for every such $i$, and hence $g(x_1,\ldots,x_d)=g(y_1,\ldots,y_d)$. Now we only need to show the converse, so let's assume that $g(x_1,\ldots,x_d)=g(y_1,\ldots,y_d)$. It will be sufficient to show that if $x_j\equiv y_j\mod p^m$ for every $j<i$, then $x_i\equiv y_i\mod p^m$, since the claim follows by induction on $i$, as the condition for $i=1$ is empty. 

Since $x_j\equiv y_j\mod p^m$ for every $j<i$, we have $f(x_j)=f(y_j)\mod p^m$ for every such $j$, and hence
$$f(x_1)+p^mf(x_2)+\cdots+p^{m(i-2)}f(x_{i-1})=
f(y_1)+p^mf(y_2)+\cdots+p^{m(i-2)}f(y_{i-1}).$$
Subtracting this from the equation $g(x_1,\ldots,x_d)=g(y_1,\ldots,y_d)$ we get that
$$p^{m(i-1)}f(x_i)+\cdots+p^{m(d-1)}f(x_d)=
p^{m(i-1)}f(y_i)+\cdots+p^{m(d-1)}f(y_d).$$
By looking at this equation mod $p^{mi}$ we get that
\begin{align*}
p^{m(i-1)}f(x_i)&\equiv p^{m(i-1)}f(x_i)+\cdots+p^{m(d-1)}f(x_d)  \\
&\equiv p^{m(i-1)}f(y_i)+\cdots+p^{m(d-1)}f(y_d)\equiv p^{m(i-1)}f(y_i)\mod p^{mi}.
\end{align*}
By dividing the congruence by $p^{m(i-1)}$ we conclude that $x_i\equiv y_i\mod p^m$. 
\end{proof}
\begin{lemma}\label{lemma9} For every pair $d,m$ of positive integers there is a $\mathrm{pReLU}$-network of width $d+1$ which computes an encoding function of type $(d,m)$ on $\mathbb Z_p^d$. 
\end{lemma}
\begin{proof}  Choose an element $\alpha_D\in D$ for every coset $D$ of $p^m\mathbb Z_p$ in $\mathbb Z_p$. By Lemma \ref{lemma3} there is a $\mathrm{pReLU}$-network of width $2$ which computes the function $f:\mathbb Z_p\to\mathbb Z_p$ such that $f(x)=\alpha_D$, if $x\in\alpha_D+p^m\mathbb Z_p$. Therefore it will be sufficient to show that the function $g:\mathbb Z^d_p\to\mathbb Z_p$ given by the rule:
$$g(x_1,x_2,\ldots,x_d)=f(x_1)+p^mf(x_2)+\cdots+p^{d(m-1)}f(x_d)$$
for every $(x_1,x_2,\ldots,x_d)\in\mathbb Z_p^d$ for this choice of $f$ is a 
$\mathrm{pReLU}$-network of width $d+1$ by Lemma \ref{lemma8}. First note that for every $i=1,2,\ldots,d$ the function $g_i:\mathbb Z^d_p\to\mathbb Z^d_p$ given by the rule:
$$g_i(x_1,x_2,\ldots,x_d)=(x_1,\ldots,x_{i-1},f(x_i),x_{i+1},\ldots,x_d)$$
for every $(x_1,x_2,\ldots,x_d)\in\mathbb Z_p^d$ is a $\mathrm{pReLU}$-network of width $d+1$. Then $g$ is the composition of the function
$g_d\circ g_{d-1}\circ\cdots\circ g_1$ and the linear map $h:\mathbb Q_p^d\to\mathbb Q_p$ given by the rule:
$$h(x_1,x_2,\ldots,x_d)=x_1+p^mx_2+\cdots+p^{d(m-1)}x_d,$$
so it is a $\mathrm{pReLU}$-network of width $d+1$, too. 
\end{proof}
\begin{proof}[Proof of Theorem \ref{upper2b}] For simplicity of notation set $d=d_x$ and let $f:\mathbb Z^d_p\to\mathbb Q_p$ be a locally constant function. By part $(ii)$ of Lemma \ref{locally_constant} there is a positive integer $m$ such that $f$ is constant on the cosets of $p^m\mathbb Z^d_p$ in $\mathbb Z^d_p$. Moreover there is a non-zero constant $c\in\mathbb Z_p$ such that $f(x)/c\in\mathbb Z_p$ for every $x\in\mathbb Z^d_p$, since $f$ is continuous and $\mathbb Z_p$ is compact. By Lemma \ref{lemma9} there is a $\mathrm{pReLU}$-network $g:\mathbb Z^d_p\to\mathbb Z_p$ of width $d+1$ which is an encoding function of type $(d,m)$. Let $S\subset\mathbb Z_p$ be the image of $g$; it is a finite subset.

Let $f^*:S\to\mathbb Z_p$ be the function such that for every $s\in S$ we have $f^*(s)=f(x)/c$ for every $x\in g^{-1}(s)$. Since $f/c$ is constant on the cosets of $p^m\mathbb Z^d_p$ in $\mathbb Z^d_p$, the function $f^*$ is well-defined. By Lemma \ref{lemma6} there is a $\mathrm{pReLU}$-network of width $2\leq d+1$ which computes a function $h:\mathbb Z_p\to\mathbb Z_p$ such that $h(x)=f^*(x)$, if $x\in S$. The composition of $g$ and $h$ is a $\mathrm{pReLU}$-network of width $d+1$ which computes $f/c$. By rescaling the last linear function by $c$ we get a $\mathrm{pReLU}$-network of width $d+1$ which computes $f$.
\end{proof}
\begin{lemma}\label{affine} Let $m$ be a positive integer and let
$\alpha\in\mathbb Z_p$. Let $C\subset\mathbb Z_p$ be the convex subset $\alpha+p^m\mathbb Z_p$, and let $f:C\to\mathbb Q_p$ be an affine map. Then $f$ maps $C$ surjectively onto $\mathbb Z_p$ if and only if $f$ is of the form $f(x)=a(x-\alpha)+b$, where $a\in\mathbb Q_p$ is such that $|a|=p^m$ and $b\in\mathbb Z_p$. 
\end{lemma}
\begin{proof} First assume that $f$ is of the form $f(x)=a(x-\alpha)+b$, where $|a|=p^m$ and $b\in\mathbb Z_p$. For every $x\in C$ we have
$|x-\alpha|\leq\frac{1}{p^m}$, and hence
$$|a(x-\alpha)+b|\leq
\max(|a(x-\alpha)|,|b|)\leq\max(p^m\cdot\frac{1}{p^m},1)=1.$$
Therefore $f$ maps $C$ into $\mathbb Z_p$. Since the inverse of $f$ is $f^{-1}(y)=a^{-1}(y-b)+\alpha$, we need to show that the latter is in $C$ when $y\in\mathbb Z_p$ is arbitrary. But the latter holds:
$$|(a^{-1}(y-b)+\alpha)-\alpha|=|a^{-1}|\cdot|y-b|\leq
\frac{1}{p^m}\cdot\max(|y|,|b|)\leq\frac{1}{p^m}\cdot1=\frac{1}{p^m}.$$
Now assume that $f$ maps $C$ surjectively onto $\mathbb Z_p$. Since $f$ is affine, it can be written in the form $f(x)=a(x-\alpha)+b$ where $a,b\in\mathbb Q_p$. Because $f(\alpha)=b$, we get that $b\in\mathbb Z_p$. Applying the ultrametric inequality for every $x\in C$ we get that
$$|a(x-\alpha)|\leq\max(|a(x-\alpha)+b|,|b|)\leq1.$$
By plugging in $x=\frac{1}{p^m}+\alpha$ we get that $|\frac{a}{p^m}|\leq1$, and hence $|a|\leq p^m$. Now assume that $|a|<p^m$. As $f$ is not constant, we have $a\neq0$. For $y=1+b\in\mathbb Z_p$ we have
$$|a^{-1}(y-b)|=|a^{-1}|\cdot|1|=|a^{-1}|>\frac{1}{p^m},$$
so by using that $\alpha\in \mathbb Z_p$ we get that $|a^{-1}(y-b)+
\alpha|>\frac{1}{p^m}$. However $f$ maps $C$ surjectively onto $\mathbb Z_p$, so the image of $\mathbb Z_p$ under $f^{-1}(y)=a^{-1}(y-b)+
\alpha$ lies in $C$. This is a contradiction, and hence $|a|=p^m$. 
\end{proof}
\begin{lemma}\label{lemma9a} Let $m$ be a positive integer and let
$D\subset\mathbb Z_p$ be a coset of $p^m\mathbb Z_p$. Let $C\subset D$ be convex and let $f:C\to\mathbb Z_p$ be a surjective affine map. If $\beta\in\mathbb Z_p$ then there is a convex subset $B\subset C$ such that the affine function $g:B\to\mathbb Q_p$ given by the rule $g(x)=\frac{f(x)}{p^m}+\frac{x-\beta}{p^m}$ maps $B$ surjectively onto
$\mathbb Z_p$.
\end{lemma}
\begin{proof} Since $\beta\in\mathbb Z_p$, the subset $D'=\{c\in D\mid\beta-c\}$ is a coset of $p^m\mathbb Z_p$ in $\mathbb Z_p$, too.  Because $f$ surjects $C$ onto $\mathbb Z_p$ there is an $\alpha\in C$ such that $f(\alpha)\in D'$. By assumption $C=\alpha+p^n\mathbb Z_p$ where $n\geq m$ is a positive integer. By Lemma \ref{affine} we have $f(x)=a(x-\alpha)+b$, where $a\in\mathbb Q_p$ is such that $|a|=p^n$. Then we can write $g$ in the form
$$g(x)=\frac{a+1}{p^m}(x-\alpha)+\frac{\alpha-\beta+b}{p^m}.$$
Since $b=f(\alpha)\in D'$ and $\alpha\in D$, and hence 
$$D'=\{d\in p^m\mathbb Z_p\mid\beta-\alpha+d\},$$
there is a $d\in p^m\mathbb Z_p$ such that $b=\beta-\alpha+d$. So we get that
$$|\frac{\alpha-\beta+b}{p^m}|=|\frac{d}{p^m}|\leq1,\textrm{ and hence }
\frac{\alpha-\beta+b}{p^m}\in\mathbb Z_p.$$
Since $|a|=p^n>1$, by the ultrametric identity $|a+1|=p^n$, and hence $|\frac{a+1}{p^m}|=p^{m+n}$. Therefore $g$ maps $\alpha+p^{m+n}\mathbb Z_p\subset C$ surjectively onto $\mathbb Z_p$ by Lemma \ref{affine}. 
\end{proof}
\begin{defn} Let $m$ be a positive integer. A juggling function of type $m$ is a function $g:\mathbb Z_p\to\mathbb Z_p$ such that for every $y\in\mathbb Z_p$ and for every coset $D$ of $p^m\mathbb Z_p$ in
$\mathbb Z_p$ we have $g^{-1}(y)\cap D\neq\emptyset$.
\end{defn}
\begin{lemma}\label{lemma9b} For every positive integer $m$ there is a  juggling function $g:\mathbb Z_p\to\mathbb Z_p$ of type $m$ such that the function $f:\mathbb Z_p\to\mathbb Z_p^2$ given by the rule $f(x)=(x,g(x))$ is computed by a $\mathrm{pReLU}$-network of width $2$.
\end{lemma}
\begin{proof} For every $\beta\in\mathbb Z_p$ Let $h_{\beta}:\mathbb Z^2_p\to\mathbb Q_p^2$ be the function given by the rule $h_{\beta}(x,y)=(x,\frac{x-\beta}{p^m}+\frac{y}{p^m})$, and let $t_{\beta}:\mathbb Z^2_p\to\mathbb Z^2_p$ be the composition $\Sigma\circ h_{\beta}$. By definition $t_{\alpha}$ is a $\mathrm{pReLU}$-network of width $2$. Let $\beta_1,\beta_2,\ldots,\beta_{p^m}$ be a list of coset representatives of $p^m\mathbb Z_p$ in $\mathbb Z_p$. Let $f_0:\mathbb Z_p\to\mathbb Z_p^2$ be the function given by the rule $f_0(x)=(x,0)$, and for every $i=1,2,\ldots,p^m$ let $f_i:\mathbb Z_p\to\mathbb Z_p^2$ be the composition of $f_0$ and the function 
$$t_{\beta_i}\circ t_{\beta_{i-1}}\circ\cdots\circ t_{\beta_1}.$$
Clearly $f_i$ is a $\mathrm{pReLU}$-network of width $2$ such that $f(x)=(x,g_i(x))$ for some function $g_i:\mathbb Z_p\to\mathbb Z_p$. For $i=0,1,2,\ldots,p^m$ we are going to show by induction on $i$ that
\begin{enumerate}
\item[$(a)$] for every $j\leq i$ there is a convex subset $B_{i,j}\subset
\beta_i+p^m\mathbb Z_p$ such that restriction of $g_i$ onto $B_{i,j}$ is an affine function which maps $B_{i,j}$ surjectively onto $\mathbb Z_p$,
\item[$(b)$] for every $j> i$ the restriction of $g_i$ onto
$\beta_i+p^m\mathbb Z_p$ is identically zero. 
\end{enumerate}
This claim implies that $f_{p^m}$ is a function of the type in the claim of the lemma, so it will be sufficient to show it. The claim is obvious for $i=0$, so assume now that the statement holds for $i-1$. If $j>i$ then $g_{i-1}$ is identically zero on $\beta_j+p^m\mathbb Z_p$, hence $h_{\beta_i}(x,g_{i-1}(x))=(x,\frac{x-\beta_i}{p^m})$ on the latter set. For any $x\in \beta_j+p^m\mathbb Z_p$ we have $\frac{x-\beta_i}{p^m}\not\in\mathbb Z_p$, so $t_i(x)=\Sigma\circ h_{\beta}(x,g_{i-1}(x))=(x,0)$ on $\beta_j+p^m\mathbb Z_p$.

Similarly $g_{i-1}$ is identically zero on $\beta_j+p^m\mathbb Z_p$, but for any $x\in \beta_i+p^m\mathbb Z_p$ we have $\frac{x-\beta_i}{p^m}\in\mathbb Z_p$, so $t_i(x)=\Sigma\circ h_{\beta}(x,g_{i-1}(x))=(x,\frac{x-\beta_i}{p^m})$ on $\beta_i+p^m\mathbb Z_p$. Therefore the restriction of $g_i$ onto $B_{i,i}=\beta_i+p^m\mathbb Z_p$ is an affine function which maps $B_{i,i}$ surjectively onto $\mathbb Z_p$ by Lemma \ref{affine}. Finally for $j<i$ the restriction of $g_{i-1}$ onto $B_{i-1,j}\subset\beta_i+p^m\mathbb Z_p$ is an affine function which maps $B_{i-1,i}$ surjectively onto $\mathbb Z_p$, so by Lemma \ref{lemma9a} the function $\frac{g_{i-1}(x)}{p^m}+\frac{x-\beta_i}{p^m}$ maps a convex subset $B_{i,j}\subset B_{i-1,j}$ surjectively onto
$\mathbb Z_p$ and is affine on this set. Because $g_i=\frac{g_{i-1}(x)}{p^m}+\frac{x-\beta_i}{p^m}$ on this set, as the latter takes values in $\mathbb Z_p$, the proof is now complete.  
\end{proof}
\begin{defn} Let $d,m$ be positive integers. A decoding function of type $(d,m)$ is a function $f:\mathbb Z_p\to\mathbb Z_p^d$ such that for every $y\in\mathbb Z_p^d$ there is an $x\in\mathbb Z_p$ such that $z-f(x)\in p^m\mathbb Z^d_p$. 
\end{defn}
\begin{lemma}\label{lemma9c} Let $g:\mathbb Z_p\to\mathbb Z_p$ be a
juggling function of type $m$ for a positive integer $m$. Then $f:\mathbb Z_p\to\mathbb Z_p^d$ given by the rule 
$$f(x)=(x,g(x),g\circ g(x),\ldots,
\underbrace{g\circ\cdots\circ g}_{\textrm{$d-1$ times}}(x))$$
is a decoding function of type $(d,m)$ on $\mathbb Z_p$.
\end{lemma}
\begin{proof} Let $(y_1,y_2,\ldots,y_n)\in\mathbb Z_p^d$ be arbitrary. We are going to construct a sequence of points $x_d,x_{d-1},\ldots,x_1\in\mathbb Z_p$ such that $x_i\equiv y_i\mod p^m$ for every $i=1,\ldots,n$ and $x_{i+1}=g(x_i)$ for every $i=1,\ldots,n-1$  by descending recursion on the index $i$. This implies that
$$f(x_1)=(x_1,g(x_1),g\circ g(x_1)\ldots,\underbrace{g\circ\cdots\circ g}_{\textrm{$d-1$ times}}(x_1))=(x_1,x_2,\ldots,x_d)$$
is in the same $p^m\mathbb Z_p$-coset as $(y_1,y_2,\ldots,y_d)$. Because the latter is arbitrary, we get that the existence of the sequence $x_d,x_{d-1},\ldots$ above implies the claim. First set $x_d=y_d$. If $x_d,x_{d-1},\ldots,x_{i+1}$ has already been constructed, then choose $x_i$ to be an element of $g^{-1}(x_{i+1})\cap(y_i+p^m\mathbb Z_p)$. The latter is possible because $g$ is a juggling function of type $m$. Clearly $x_i$ satisfies the required conditions $x_i\equiv y_i\mod p^m$ and $g(x_i)=x_{i+1}$. 
\end{proof}
\begin{lemma}\label{lemma10} For every pair $d,m$ of positive integers there is a $\mathrm{pReLU}$-network of width $d$ which computes a decoding function of type $(d,m)$ on $\mathbb Z_p$. 
\end{lemma}
\begin{proof} By Lemma \ref{lemma9b} there is a juggling function $g:\mathbb Z_p\to\mathbb Z_p$ of type $m$ such that the function $f:\mathbb Z_p\to\mathbb Z_p^2$ given by the rule $f(x)=(x,g(x))$ is computed by a $\mathrm{pReLU}$-network of width $2$. For every positive integer $d$ let $f_d:\mathbb Z_p\to\mathbb Z_p^d$ be the function given by the rule:
$$f_d(x)=(x,g(x),g\circ g(x),\ldots,
\underbrace{g\circ\cdots\circ g}_{\textrm{$d-1$ times}}(x)).$$
By Lemma \ref{lemma9c} it will be sufficient to prove that $f_d$ is computed by a $\mathrm{pReLU}$-network of width $d$ by induction on $d$. As $f_1$ is the identity function on $\mathbb Z_p$, the case $i=1$ is clear. For $d\geq2$ the function $f_d$ is the composition of $f_{d-1}$ and 
the function $g_d:\mathbb Z^{d-1}_p\to\mathbb Z^d_p$ given by the rule:
$$g_d(x_1,x_2,\ldots,x_{d-1})=(x_1,\ldots,x_{d-1},g(x_{d-1}))$$
for every $(x_1,x_2,\ldots,x_d)\in\mathbb Z_p^{d-1}$. The latter is a
$\mathrm{pReLU}$-network of width $d$ by Lemma \ref{lemma9b}, so $f_d$ is the composition of a $\mathrm{pReLU}$-network of width $d-1$ and a $\mathrm{pReLU}$-network of width $d$, so it is a $\mathrm{pReLU}$-network of width $d$. 
\end{proof}
We are finally ready to give a full proof of Theorem \ref{upper}. Let $f:\mathbb Z^{d_x}_p\to\mathbb Q_p^{d_y}$ be a continuous function which we want to approximate arbitrarily well with $\mathrm{pReLU}$-network of width $\max(d_x+1,d_y)$ in any $L_q$ norm for each $q\in[1,\infty]$. As we already remarked we will only need to do this for the $L_{\infty}=C_1$ norm. By part $(i)$ of Lemma \ref{locally_constant} we may assume without the loss of generality that $f$ is a locally constant function. By part $(ii)$ of Lemma \ref{locally_constant} there is a positive integer $m$ such that $f$ is constant on the cosets of $p^m\mathbb Z_p$ in $\mathbb Z_p$.

There is a non-zero constant $c\in\mathbb Z_p$ such that $f(x)/c\in\mathbb Z^{d_y}_p$ for every $x\in\mathbb Z^d_p$, since $f$ is continuous and $\mathbb Z_p$ is compact. If we can approximate $f/c$ 
arbitrarily well with $\mathrm{pReLU}$-networks of width
$\max(d_x+1,d_y)$ in the $C_1$ norm then the $\mathrm{pReLU}$-networks of width $\max(d_x+1,d_y)$ which we get by rescaling the last linear function by $c$ will approximate $f$ arbitrarily well in the $C_1$ norm. Therefore me may assume without the loss of generality that $f$ takes values in $\mathbb Z^{d_y}$. 

For every positive integer $n$ we will construct a $\mathrm{pReLU}$-network $f^*:\mathbb Z_p^{d_x}\to\mathbb Z_p^{d_y}$ of width
$\max(d_x+1,d_y)$ such that $\|f-f^*\|_{\infty}\leq\frac{1}{p^n}$ as follows. By Lemma \ref{lemma10} there is a $\mathrm{pReLU}$-network $g:\mathbb Z_p\to\mathbb Z^{d_y}_p$ of width $d_y$ which is a decoding function of type $(d_y,n)$. Let $S\subset\mathbb Z_p$ be a finite subset such that for every coset $C$ of $p^n\mathbb Z_p^{d_y}$ in $\mathbb Z_p^{d_y}$ there is exactly one $s\in S$ such that $g(s)\in C$. For every coset $D$ of $p^m\mathbb Z_p^{d_x}$ in $\mathbb Z_p^{d_x}$ let $h^*(D)\in S$ be the unique element such that $f$ maps $D$ into the $p^n\mathbb Z_p^{d_y}$-coset of $g(H^*(D))$ in $p^n\mathbb Z_p^{d_y}$. 

By Theorem \ref{upper2b} there is a $\mathrm{pReLU}$-network 
$h:\mathbb Z_p^{d_x}\to\mathbb Z_p$ of width $d_x+1$ which computes a function $h:\mathbb Z^{d_x}_p\to\mathbb Z_p$ such that for every coset $D$ of $p^m\mathbb Z_p^{d_x}$ in $\mathbb Z_p^{d_x}$ we have $h(x)=h^*(D)$, if $x\in D$. The composition of $h$ and $g$ is a $\mathrm{pReLU}$-network $f^*:\mathbb Z_p^{d_x}\to\mathbb Z_p^{d_y}$ of width $\max(d_x+1,d_y)$ which is constant on the cosets of $p^m\mathbb Z_p^{d_x}$ in $\mathbb Z_p^{d_x}$ and for every such coset $D$ the value of $f$ and $f^*$ on $D$ lies in the same coset of $p^n\mathbb Z_p^{d_y}$ in $\mathbb Z_p^{d_y}$ by construction. Therefore $\|f-f^*\|_{\infty}\leq\frac{1}{p^n}$.
\end{proof}

\end{document}